\documentclass[10pt,reqno]{amsart}
\topmargin -1.2cm \evensidemargin 0cm \oddsidemargin 0cm \textwidth 16cm \textheight 22cm

\usepackage{amsmath, amsopn, amsfonts, amsthm, amssymb, amscd, enumerate, multicol, scalefnt}

\usepackage{amsmath, amsfonts, amssymb, amscd, enumerate, scalefnt}

\usepackage{etex}
\usepackage{hyperref}
\usepackage{t1enc}
\usepackage{graphicx}
\usepackage[all]{xy}
\usepackage{color}
\usepackage{slashed}
\usepackage[mathscr]{euscript}

\theoremstyle{plain}
\newtheorem{theorem}{Theorem}[section]
\newtheorem{definition}[theorem]{Definition}
\newtheorem{lemma}[theorem]{Lemma}
\newtheorem{proposition}[theorem]{Proposition}
\newtheorem{corollary}[theorem]{Corollary}
\newtheorem{remark}[theorem]{Remark}
\newtheorem{ex}[theorem]{Example}
\newtheorem*{notation}{Notation}
\newtheorem{thmint}{Theorem}

\renewcommand{\b}{\begin{equation}}
\newcommand{\e}{\end{equation}}

\newcommand{\g}{\mathfrak{g}}

\newcommand{\n}{\mathfrak{n}}

\renewcommand{\l}{\mathfrak{l}}

\newcommand\C{{\mathbb C}}
\newcommand\R{{\mathbb R}}

\newcommand{\tr}{\operatorname{tr}}
\newcommand{\id}{\operatorname{Id}}
\newcommand{\ad}{\operatorname{ad}}

\newcommand{\pp}{\operatorname{P}}

\newcommand{\ip}{\langle\cdot,\cdot\rangle}

\sloppy
\sloppy

\title[The Hull-Strominger system and the Anomaly flow on a class of solvmanifolds]{The Hull-Strominger system and the Anomaly flow on a class of solvmanifolds}

\author{Mattia Pujia}
\address{Dipartimento di Matematica G. Peano, Universit\`a di Torino, Via Carlo Alberto 10, 10123 Torino, Italy}
\email{mattia.pujia@unito.it}

\date{\today}

\subjclass[2010]{}
\keywords{Hull-Strominger system; Anomaly flow; Solvmanifolds}
\subjclass[2010]{Primary 53C44, 35J15; Secondary 53C15, 53B15, 53C30}
\thanks{The author was partially supported by G.N.S.A.G.A. of I.N.d.A.M.} 

\begin{document} 

\begin{abstract} We study the Hull-Strominger system and the Anomaly flow on a special class of 2-step solvmanifolds, namely the class of almost-abelian Lie groups. In this setting, we characterize the existence of invariant solutions to the Hull-Strominger system with respect to the family of Gauduchon connections in the anomaly cancellation equation. Then, motivated by the results on the Anomaly flow, we investigate the flow of invariant metrics in our setting, proving that it always reduces to a flow of a special form. Finally, under an extra assumption on the initial metrics, we show that the flow is immortal and, when the slope parameter is zero, it always converges to a K\"ahler metric, in Cheeger-Gromov topology.
\end{abstract}

\maketitle

\section{Introduction}
In \cite{CHSW}, Candelas, Horowitz, Strominger and Witten proposed a compactification of the 10-dimensional heterotic string given by the product of a maximal symmetric space-time with a 6-dimensional compact K\"ahler Calabi-Yau manifold, the so-called internal space. Shortly after, Hull \cite{Hull86} and Strominger \cite{Strom} independently considered a generalization of such compactification by allowing the product to be warped. In turn, the symmetric condition on the space-time, together with the ``warp factor'', gives rise to a complex system of PDEs on the internal space known as the Hull-Strominger system. More concretely, let $(X,\omega)$ be a 3-dimensional Hermitian manifold admitting a nowhere vanishing holomorphic $(3,0)$-form $\Psi$. Let also $(E,h)$ be a holomorphic vector bundle over $X$. Then, the {\em Hull-Strominger system} consists of
\begin{equation}\label{HS}
\begin{aligned}
&\omega^2\wedge F=0\,,\quad F^{2,0}=F^{0,2}=0\,,\\
&i\partial\overline\partial \omega = \frac{\alpha'}{4}\left( {\tr}(\Omega^\tau\!\wedge \Omega^\tau) - {\tr}(F\wedge F)\right)\,,\\
&d(\Vert\Psi\Vert_{\omega}\,\omega^2)=0\,,
\end{aligned}
\end{equation}
where $\Omega^\tau$ and $F$ are respectively the curvature of the Gauduchon connections $\nabla^\tau$ on $(X,\omega)$ and $\widetilde\nabla$ on $(E,h)$. 

In the above system, the first two equations represent the {\em Hermitian-Yang-Mills} equation for the connection $\widetilde\nabla$. The third equation is the so-called {\em anomaly cancellation} term (or {\em Bianchi identity}), which follows by the Green-Schwarz cancellation mechanism in string theory, while $\alpha'\in\R$ is the {\em slope parameter}. Finally, we mention that the last equation, the {\em conformally balanced} one (or {\em dilatino}), was originally formulated as
$$
d^\ast \omega=i(\bar\partial-\partial)\ln\Vert\Psi\Vert_\omega\,,
$$
and the above expression is due to Li and Yau \cite{LY05}.\medskip

In the present paper we focus on the Hull-Strominger system on almost-abelian Lie groups, a special class of 2-step solvable Lie groups. Under the assumption for the bundle of being `flat', we characterize the existence of invariant solutions to the system depending on the connection $\nabla^\tau$.

Let $(G,J)$ be a $6$-dimensional almost-abelian Lie group equipped with a left-invariant complex structure. Let also $\Psi$ be a left-invariant form on $G$ and $(E,h)$ a flat bundle, i.e. $F\equiv0$. Then, if we assume $\alpha'\in\R\backslash\{0\}$, our first result states as follows

\begin{thmint}\label{HS_sol_flat} If the almost-abelian Lie group $G$ is non-K\"ahler, then the Hull-Strominger system admits a left-invariant solution if and only if $\nabla^\tau$ is neither the Chern nor the Lichnerowicz connection.
\end{thmint}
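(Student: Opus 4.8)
The plan is to reduce the Hull--Strominger system \eqref{HS} to a single scalar equation in the endomorphism $A:=\ad_{e_1}|_{\h}$ that encodes the almost-abelian Hermitian data, and then to analyse how that equation depends on the Gauduchon parameter $\tau$.

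First I would dispose of the equations that become elementary. Since $(E,h)$ is flat we have $F\equiv 0$, so the Hermitian--Yang--Mills equations $\omega^{2}\wedge F=0$, $F^{2,0}=F^{0,2}=0$ hold automatically, and the term $\tr(F\wedge F)$ drops out of the anomaly cancellation. As $\Psi$ and $\omega$ are both left-invariant, $\|\Psi\|_{\omega}$ is a left-invariant, hence constant, function on $G$, so the conformally balanced equation $d(\|\Psi\|_{\omega}\,\omega^{2})=0$ is equivalent to $d(\omega^{2})=0$, i.e.\ to $\omega$ being balanced. In the standard normal form for the almost-abelian Hermitian structure --- abelian ideal $\h$ of codimension one, $Je_{1}\in\h$, a $J$-invariant $A$-stable complement on which $A$ is complex-linear, and an adapted orthonormal coframe --- the balanced condition becomes a linear trace-type condition on $A$, and we are left to solve, for the prescribed $\alpha'\neq 0$, only
\[
i\partial\overline\partial\omega=\tfrac{\alpha'}{4}\,\tr(\Omega^{\tau}\wedge\Omega^{\tau})
\]
among balanced left-invariant structures.

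Next I would compute both sides of this equation in the coframe. Using the structure equations determined by $A$, the left-hand side is a left-invariant real $(2,2)$-form which one checks is a multiple $\lambda(A)\,\Phi$ of a single fixed $(2,2)$-form $\Phi$, where $\lambda(A)$ is an explicit quadratic polynomial in the entries of $A$ that vanishes exactly when $\omega$ is pluriclosed. For the right-hand side, I would write the connection $1$-form of $\nabla^{\tau}$ as an affine function of $\tau$ along the Gauduchon line of canonical Hermitian connections, differentiate to obtain $\Omega^{\tau}=\Omega_{0}+\tau\Omega_{1}+\tau^{2}\Omega_{2}$, and expand $\tr(\Omega^{\tau}\wedge\Omega^{\tau})$. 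The crucial step --- which I expect to be the main obstacle --- is to show that, once the balanced constraint is imposed, $\tr(\Omega^{\tau}\wedge\Omega^{\tau})$ has no $(3,1)$ or $(1,3)$ component, is again a multiple of $\Phi$, and factors as $c(\tau)\,q(A)\,\Phi$ with $q$ a quartic independent of $\tau$ and $c(\tau)$ a quadratic polynomial whose two roots are precisely the values of $\tau$ for which $\nabla^{\tau}$ is the Chern, respectively the Lichnerowicz, connection. Granting this, the anomaly equation collapses to the scalar identity $\lambda(A)=\tfrac{\alpha'}{4}\,c(\tau)\,q(A)$.

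Finally I would read off the dichotomy. If $\nabla^{\tau}$ is the Chern or the Lichnerowicz connection, then $c(\tau)=0$, so every left-invariant solution must satisfy $\lambda(A)=0$, i.e.\ $i\partial\overline\partial\omega=0$; combined with the balanced condition on $A$ this forces the Kähler condition on $A$ (the fact that a left-invariant metric which is simultaneously balanced and pluriclosed is Kähler in this class), contradicting the hypothesis that $G$ is non-Kähler, so no invariant solution exists. Conversely, if $\nabla^{\tau}$ is neither, then $c(\tau)\neq 0$; on the balanced non-Kähler locus one has $\lambda(A)\neq 0$, and since the balanced, non-Kähler and holomorphic-$(3,0)$-form conditions are all invariant under the rescaling $A\mapsto sA$ while $\lambda$ and $q$ are homogeneous of degrees $2$ and $4$, it suffices to exhibit one balanced non-Kähler $A_{0}$ admitting a left-invariant nowhere vanishing holomorphic $(3,0)$-form for which the ratio $\lambda(A_{0})/q(A_{0})$ has the sign of $\alpha'c(\tau)$ --- possible because this ratio is not sign-definite over the family --- and then to rescale: $A=sA_{0}$ with $s^{2}=4\lambda(A_{0})/\bigl(\alpha'c(\tau)q(A_{0})\bigr)$ solves the identity. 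This produces the desired left-invariant solution of \eqref{HS}, and together with the previous paragraph it establishes both implications of the theorem.
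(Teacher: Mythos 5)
Your overall strategy coincides with the paper's: reduce to the balanced case, show that on balanced structures $\tr(\Omega^\tau\wedge\Omega^\tau)$ is proportional to $i\partial\overline\partial\omega$ with a scalar factor vanishing exactly when $\tau\in\{0,1\}$ or the metric is K\"ahler, and use the fact that balanced plus pluriclosed forces K\"ahler to settle the ``only if'' direction. That part of your argument is sound and is exactly Lemma \ref{ddbar} plus its consequences. (Two small inaccuracies: $i\partial\overline\partial\omega$ is not a multiple of a single $A$-independent $(2,2)$-form $\Phi$ --- it sweeps out a four-dimensional space of forms as $A$ varies --- and the $\tau$-factor is the cubic $\tau(\tau-1)^2$ with a double root at $\tau=1$, not a quadratic. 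Neither affects the structure of the argument, since only the proportionality and the root set $\{0,1\}$ matter.)

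The genuine gap is in your existence step. The proportionality constant is $K(\omega,\tau)=\tfrac{\tau(\tau-1)^2}{8}\tr_\omega(S(A)^2)$ with $\tr_\omega(S(A)^2)=\lVert S(A)\rVert^2>0$ on the balanced non-K\"ahler locus, so in your notation $q(A)/\lambda(A)=\tfrac18\tr_\omega(S(A)^2)$ is \emph{strictly positive} there: the ratio $\lambda(A)/q(A)$ is sign-definite, contrary to your claim that it is not. Consequently the scalar identity $1=\tfrac{\alpha'}{4}K(\omega,\tau)$ is solvable (by rescaling $A$, which scales $\lVert S(A)\rVert^2$ by an arbitrary positive factor) if and only if $\alpha'\,\tau(\tau-1)^2>0$, i.e.\ only when $\tau$ has the same sign as $\alpha'$. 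For a \emph{prescribed} $\alpha'>0$ and, say, the Bismut connection $\tau=-1$, no left-invariant solution exists, so the statement you set out to prove --- with $\alpha'$ fixed in advance --- is false for half of the admissible $\tau$. The paper avoids this by quantifying $\alpha'$ existentially: for $\tau\notin\{0,1\}$ one \emph{chooses} $\alpha'\in\R\setminus\{0\}$ (with the sign of $\tau$) so that $\tfrac{\alpha'}{4}K(\omega,\tau)=1$. To repair your argument you should either adopt that reading of the theorem, or add the sign condition $\alpha'\tau>0$ as a hypothesis; the rescaling device alone cannot produce the needed sign change.
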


Worthy enough, in the K\"ahler setting the above claim further simplifies, since every K\"ahler metric solves the Hull-Strominger system independently of the choice of $\alpha'$ and $\nabla^\tau$ (Proposition \ref{HS-kah}). Furthermore, almost-abelian K\"ahler Lie groups turn out to admit solutions to the {\em heterotic equation of motion} (Theorem \ref{HSI_sol}). We stress that, this result follows by a characterization of the instanton condition in our setting (Proposition \ref{Inst}). Indeed, in \cite{Ivan10}, Ivanov proved that a solution to the Hull-Strominger system, satisfying the extra assumption for $\nabla^\tau$ of being an instanton, gives rise to a solution to the heterotic equation of motion. Here, by an {\em instanton} we mean a connection $\nabla^\tau$ satisfying the Hermitian-Yang-Mills equation.\medskip

Although the Hull-Strominger system has been extensively investigated both from physicists and mathematicians, many it is still unknown and only few explicit solutions have been found over the years. In particular, the first solutions on compact non-K\"ahler manifolds were obtained by Fu and Yau on toric bundles over K3 surfaces, under the assumption for $\nabla^\tau$ of being Chern \cite{FY07, FY08} (see also \cite{FGV}, for a generalization of Fu-Yau's results over K3 orbifolds). Since then, other solutions to the system have been obtained in different settings, and we refer the reader to \cite{AG12,AG12_2,Fei16,FHP19,FIUV14,UV14}. 

Let us mention that, the Hull-Strominger system originally required the connections $\nabla^\tau$ and $\widetilde\nabla$ to be Chern. Nonetheless, as shown in \cite{BG14}, such an assumption may lead to non-existence results and hence different Hermitian connections can be considered (see also \cite{Strom}). In this direction, some results have been obtained by different authors over the years and we refer the reader to \cite{Bd89,CCAL,CCALMZ,FY15,FIUV,Hull86_3,IvIv,OUV}.

\medskip
Due to the complexity of the equations involved in the Hull-Strominger system, different strategies to detect explicit solutions have been proposed over the years. Among these, a fruitful approach seems to be the one proposed by Phong, Picard and Zhang, who introduced a coupled flow of Hermitian metrics whose stationary points are solutions to the Hull-Strominger system, namely the Anomaly flow \cite{PPZ18}. Concretely, the {\em Anomaly flow} is the flow of Hermitian metrics $(\omega_t,h_t)$, with $\omega_t$ on $X$ and $h_t$ along the fibers of $E$, satisfying
\begin{equation}\label{AF-G}
\begin{aligned}
\partial_t(\Vert\Psi\Vert_{\omega_t}\,\omega_t^2)\,\, = &\,\,\,  {i\partial\overline\partial \omega_t - \frac{\alpha'}{4}\left( {\rm tr}(\Omega_t^\tau\wedge \Omega_t^\tau) - {\rm tr}(F_t\wedge F_t)\right)}\,, \\
h_t^{-1} \partial_t\, h_t  \,\, = & \,\,\, \frac{\omega_t^2\wedge F_t}{\omega_t^3}\,,
\end{aligned}
\end{equation}
where $\Omega^\tau_t$ and $F_t$ are the curvature of $(X,\omega_t,\nabla^\tau)$ and $(E,h_t,\widetilde\nabla)$. 

\medskip
Our next results concern the Anomaly flow on almost-abelian Lie groups. More precisely, we assume $(G,J,\Psi)$ as in Theorem \ref{HS_sol_flat} and $(E,h_t)$ constantly flat, i.e. $F_t\equiv 0$. In this setting, the Anomaly flow reduces to
\begin{equation}\label{AF}
\partial_t(\Vert\Psi\Vert_{\omega_t}\,\omega_t^2) = i\partial\overline\partial \omega_t - \frac{\alpha'}{4} \tr(\Omega_t^\tau\!\wedge \Omega_t^\tau)\,,
\end{equation}
and we have the following

\begin{thmint}\label{AF-bal} The Anomaly flow \eqref{AF} preserves the balanced condition along left-invariant solutions, for any Gauduchon connection. Moreover, if the initial datum is balanced, the flow of left-invariant metrics reduces to
$$
\partial_t(\Vert\Psi\Vert_{\omega_t}\,\omega_t^2) =  f(\omega_t) \, i\partial\overline\partial \omega_t\,,
$$
where $f(\omega_t)$ is a real-valued function depending on $\alpha'$ and $\nabla^\tau$ (see \eqref{f(omega)}).
\end{thmint}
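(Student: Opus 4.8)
The plan is to treat the two assertions separately.

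For the first one, the point is that the right-hand side of \eqref{AF} is always a closed $4$-form. Indeed $i\partial\overline\partial\omega_t$ is $d$-closed, and $\tr(\Omega^\tau_t\wedge\Omega^\tau_t)$ is $d$-closed for \emph{every} metric connection $\nabla^\tau$ — this is the usual Chern--Weil fact, immediate from the Bianchi identity $d\Omega^\tau=[\Omega^\tau,\theta^\tau]$ together with the graded cyclicity of the trace. On left-invariant solutions (which exist, since on the finite-dimensional space of left-invariant metrics \eqref{AF} recovers $\partial_t\omega_t$ and hence becomes an ODE) the function $\|\Psi\|_{\omega_t}$ is spatially constant, so the conformally balanced and the balanced conditions coincide; moreover $\frac{d}{dt}\,d(\|\Psi\|_{\omega_t}\omega_t^2)=d\,\partial_t(\|\Psi\|_{\omega_t}\omega_t^2)=0$, whence $d(\|\Psi\|_{\omega_t}\omega_t^2)$ is constant in $t$. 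Therefore, if $\omega_0$ is balanced, so is $\omega_t$ for all $t$ in the maximal interval of existence, and this holds for every Gauduchon connection.

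For the second assertion I would argue by an explicit computation in an adapted basis. Fix a coframe $e^1,\dots,e^6$ with $\n=\langle e_1,\dots,e_5\rangle$ the abelian ideal, $Je_5=e_6$, and $\langle e_1,\dots,e_4\rangle$ the $J$-invariant complement of $\langle e_5\rangle$ in $\n$; the structure equations then take the form $de^6=0$, $de^i=-e^6\wedge\mu^i$ $(i=1,\dots,5)$, with $\mu^i=\sum_j A_{ij}e^j$ determined by $A=\ad_{e_6}|_\n$, and the integrability of $J$ forces $A$ into the block form $\left(\begin{smallmatrix} B&v\\ 0&a\end{smallmatrix}\right)$ with $[B,J|_{\langle e_1,\dots,e_4\rangle}]=0$. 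A general left-invariant Hermitian metric $\omega$ is then described by finitely many real parameters; imposing $d\omega^2=0$, which by the structure equations amounts to explicit relations among these parameters and the entries of $A$, singles out the balanced metrics and, after normalizing $\omega$ by a unitary automorphism, puts it in a convenient form.

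With this in hand, the heart of the matter is to compute $i\partial\overline\partial\omega$ and $\tr(\Omega^\tau\wedge\Omega^\tau)$ as left-invariant $(2,2)$-forms and to check that, modulo the balanced relations, they are proportional. For $i\partial\overline\partial\omega$ one uses $i\partial\overline\partial\omega=-i\,d\big((d\omega)^{2,1}\big)$ and the structure equations. For the curvature, one writes $\nabla^\tau$ as the Chern connection corrected by the explicit metric-dependent multiple of the Chern-torsion tensor that defines the Gauduchon family, recalls that for a left-invariant metric the Chern connection, its torsion, and hence $\Omega^\tau$, are all polynomial expressions in $A$ and the metric coefficients, and so obtains $\tr(\Omega^\tau\wedge\Omega^\tau)$ as such a polynomial; one then verifies that, on balanced metrics, it equals $c_\tau(\omega)\, i\partial\overline\partial\omega$ for a real function $c_\tau(\omega)$ depending only on $\omega$ and on the parameter $\tau$ labelling the connection (and not on $\alpha'$). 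Substituting into \eqref{AF} gives $\partial_t(\|\Psi\|_{\omega_t}\omega_t^2)=\big(1-\tfrac{\alpha'}{4}c_\tau(\omega_t)\big)\,i\partial\overline\partial\omega_t$, which is the claimed equation with $f(\omega_t)=1-\tfrac{\alpha'}{4}c_\tau(\omega_t)$; identifying this with \eqref{f(omega)} is then bookkeeping. (In the K\"ahler case $i\partial\overline\partial\omega\equiv 0$ and there is nothing to prove.)

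The main obstacle is precisely this last proportionality: a priori $\tr(\Omega^\tau\wedge\Omega^\tau)$ is merely one closed left-invariant $(2,2)$-form among many, and one must show that the balanced constraint pins it to the line through $i\partial\overline\partial\omega$ (concretely, both turn out to be multiples of a single left-invariant $(2,2)$-form). This is where the rigidity of the almost-abelian, $2$-step structure is used — the entire curvature tensor is carried by the ``$e^6$-direction'', which makes $\tr(\Omega^\tau\wedge\Omega^\tau)$ collapse — but the bookkeeping must be done with care, in particular tracking the $(2,0)+(0,2)$-component of $\Omega^\tau$, which is nonzero as soon as $\nabla^\tau$ is not the Chern connection.
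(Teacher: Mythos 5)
Your argument for the first assertion is correct but takes a genuinely different route from the paper. You observe that the right-hand side of \eqref{AF} is $d$-closed for \emph{any} metric connection, by Chern--Weil (Bianchi identity plus cyclicity of the trace), so that $\partial_t\,d(\Vert\Psi\Vert_{\omega_t}\omega_t^2)=0$ and the (conformally) balanced condition propagates directly; this is more general and more economical than what the paper does, since it needs no information about $\tr(\Omega^\tau\wedge\Omega^\tau)$ beyond its closedness. The paper instead proves both assertions simultaneously by a single trick: it introduces the auxiliary flow $\partial_t(\Vert\Psi\Vert_{\widetilde\omega_t}\widetilde\omega_t^2)=f(\widetilde\omega_t)\,i\partial\bar\partial\widetilde\omega_t$, which preserves balancedness for the trivial reason that its right-hand side is exact, then invokes the proportionality $\tr(\Omega^\tau\wedge\Omega^\tau)=K(\omega,\tau)\,i\partial\bar\partial\omega$ on balanced metrics (Lemma \ref{ddbar}) to conclude that the auxiliary solution also solves \eqref{AF}, and finishes by uniqueness of ODE solutions. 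What the paper's route buys is that the proportionality lemma only ever needs to be applied to metrics already known to be balanced; what your route buys is a cleaner separation and a preservation statement that does not depend on the almost-abelian structure at all. For the second assertion your strategy coincides with the paper's: the entire content is the proportionality of $\tr(\Omega^\tau\wedge\Omega^\tau)$ and $i\partial\bar\partial\omega$ on balanced structures, which you correctly identify as the crux but do not carry out; in the paper this is the explicit computation of Proposition \ref{tr-bas} together with the $dd^c\omega$ computation in Lemma \ref{ddbar}, yielding $K(\omega,\tau)=\tfrac{\tau(\tau-1)^2}{8}\tr_\omega(S(A)^2)$ and hence $f(\omega_t)=1-\tfrac{\alpha'}{4}K(\omega_t,\tau)$, exactly as you predict (note the factor vanishes for $\tau=0,1$, consistent with Theorem \ref{HS_sol_flat}). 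Your sketch of why the proportionality should hold (all curvature concentrated in the $e^6$-direction, forcing $\tr(\Omega^\tau\wedge\Omega^\tau)$ into $\operatorname{span}\{e^{1rs6}\}$) is precisely the mechanism the paper exploits, so the remaining work is bookkeeping rather than a missing idea.
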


Our last result is about the long-time existence and convergence of the Anomaly flow.

\begin{thmint} \label{AF-long} Let $\omega_0$ be a left-invariant balanced metric on $(G,J)$, satisfying $f(\omega_0)>0$. Then, the Anomaly flow \eqref{AF} of left-invariant metrics starting at $\omega_0$ is defined for every $t\in[0,+\infty)$. Furthermore, in the special case $\alpha'=0$ (that is, $f(\omega_t)\equiv1$), the solution to the flow converges to an almost-abelian K\"ahler Lie group $(\bar G, \bar J,\bar \omega)$ as $t\to\infty$, in Cheeger-Gromov topology.
\end{thmint}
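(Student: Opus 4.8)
The plan is to turn the statement into a question about a finite-dimensional dynamical system and then run a monotonicity argument. Since $G$ is almost-abelian with left-invariant complex structure, I would first fix an adapted basis $\{e_1,\dots,e_6\}$ of $\g$ with $Je_{2k-1}=e_{2k}$, abelian ideal $\n=\langle e_1,\dots,e_5\rangle$ and $e_6$ transverse to $\n$, so that the whole Lie structure is encoded by the matrix $B:=\ad_{e_6}|_{\n}$; splitting $\n=\R e_5\oplus\n_1$ with $\n_1=\n\cap J\n$ the $4$-dimensional $J$-invariant piece, integrability of $J$ forces $B$ into the corresponding block-triangular normal form. A left-invariant Hermitian metric is then a positive-definite $J$-Hermitian form, i.e. a point $G$ in a finite-dimensional cone, and by Theorem~\ref{AF-bal} the flow \eqref{AF} started at a balanced $\omega_0$ stays balanced and reads $\partial_t(\Vert\Psi\Vert_{\omega_t}\omega_t^2)=f(\omega_t)\,i\partial\overline\partial\omega_t$. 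Expanding both sides in the fixed coframe turns this into an explicit ODE $\dot G=-P(G)$, where $P$ is rational in the entries of $G$ with coefficients built from $B$: indeed $\Vert\Psi\Vert_{\omega_t}$ is a power of $\det G$ up to a constant, $i\partial\overline\partial\omega$ is computed from the Chevalley--Eilenberg differential on $\Lambda^\bullet\g^\ast$ hence polynomial in $B$ and $G$, and $f(\omega)$ from \eqref{f(omega)} is smooth in $G$ on the locus $f\neq0$. This step is bookkeeping; the content is in what follows.

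\emph{Long-time existence.} On the balanced locus the ODE should collapse drastically, essentially to a couple of scalar unknowns, since the balanced condition in the almost-abelian case pins down the direction of $i\partial\overline\partial\omega$ (a conformal factor $u_t=\Vert\Psi\Vert_{\omega_t}$, or the volume, together with a shape parameter). Given $f(\omega_0)>0$, I would first show that $f(\omega_t)>0$ is preserved by deriving an evolution inequality for $f(\omega_t)$ along the reduced flow and checking that the set $\{f>0\}$ is forward-invariant, i.e. $f=0$ is not reached in finite time. With positivity of $f$ in hand the flow is strictly forward-nondegenerate, so global existence follows from a priori bounds: on an arbitrary interval $[0,T)$ I would bound $\tfrac{d}{dt}\log(\tr G_t)$ and $\tfrac{d}{dt}\log(\det G_t)$ in terms of the quantities already under control, which prevents both blow-up of the entries of $G_t$ and collapse of the metric, ruling out finite-time singularities by standard ODE continuation.

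\emph{Convergence for $\alpha'=0$.} Now $f\equiv1$ and the flow is $\partial_t(\Vert\Psi\Vert_{\omega_t}\omega_t^2)=i\partial\overline\partial\omega_t$, whose stationary points are the metrics with $i\partial\overline\partial\omega=0$. Passing to the normalized picture — fixing the volume by rescaling $\omega_t$, equivalently rescaling the bracket of $\g$, which is exactly the Cheeger--Gromov/bracket-flow viewpoint — I would look for a Lyapunov functional for the normalized flow, the natural candidates being $t\mapsto\Vert\Psi\Vert_{\omega_t}$ itself or a normalized energy $\int_G|i\partial\overline\partial\omega_t|^2$, monotone and bounded, which forces the normalized velocity to tend to $0$ and yields subconvergence to a fixed point $(\bar G,\bar J,\bar\omega)$. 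I would then isolate the part of $B=\ad_{e_6}|_{\n}$ obstructing the Kähler condition — roughly, the component failing to be skew-adjoint and $J$-commuting for the limiting inner product — and show that the normalized metric evolves so as to send that component to zero in the limit; together with the classification of Kähler almost-abelian Lie algebras this identifies $(\bar G,\bar J,\bar\omega)$ as a Kähler almost-abelian Lie group and promotes subconvergence to convergence in Cheeger--Gromov topology.

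The main obstacle is two-fold. First, carrying out the reduction cleanly enough that the balanced Anomaly flow becomes a manageable low-dimensional system with a visible sign structure; this rests entirely on getting explicit formulas for $i\partial\overline\partial\omega$ and for $\Vert\Psi\Vert_\omega$ in the almost-abelian balanced setting, and on verifying invariance of $\{f>0\}$. Second, in the $\alpha'=0$ case, producing the right monotone quantity and controlling the rescaling so that boundedness genuinely improves to convergence — excluding slowly drifting, non-convergent trajectories — and matching the limit to a point of the Kähler locus rather than merely to some $i\partial\overline\partial$-closed metric.
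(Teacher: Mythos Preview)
Your outline --- reduce to a finite-dimensional ODE, preserve $\{f>0\}$, find a monotone quantity, pass to a limit --- is the right architecture, but the execution diverges from the paper in a way that leaves real gaps.

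The paper does not keep the bracket fixed and evolve the Gram matrix $G_t$; it uses Lauret's bracket-flow trick from the outset (Section~\ref{flow_sec}), fixing the inner product and evolving the Lie bracket instead. Because a balanced almost-abelian bracket with trivial canonical bundle is encoded by a single $A\in\gl(\n_1)$ with $[A,J|_{\n_1}]=0$ and $\tr A=\tr JA=0$, the entire flow becomes the matrix ODE
\[
\tfrac{d}{dt}A \;=\; \Vert\Psi\Vert^{-2}\,f(A)\big(2[[A^+,A^-],A]-\lVert A^+\rVert^2 A\big),
\]
where $A^{\pm}$ are the symmetric/antisymmetric parts. Long-time existence is then two lines: the identity $\langle[[A^+,A^-],A],A\rangle=-2\lVert[A^+,A^-]\rVert^2$ gives $\tfrac{d}{dt}\lVert A\rVert^2\le0$ whenever $f(A)>0$, and $\tfrac{d}{dt}f(A)=c\,f(A)\cdot(\text{nonnegative})$ preserves the sign of $f$. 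For $\alpha'=0$ the Lyapunov function is $\lVert A^+\rVert^2$, which obeys $\tfrac{d}{dt}\lVert A^+\rVert^2\le-\tfrac12\lVert A^+\rVert^4$; hence $A^+_t\to0$, the limit bracket satisfies $\bar A\in\mathfrak u(\n_1)$ (i.e.\ is K\"ahler), and Cheeger--Gromov subconvergence is imported from \cite{LauLMS}.

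Two specific points in your plan would not go through as written. First, the expectation that on the balanced locus the system ``collapses essentially to a couple of scalar unknowns'' is off: even after the bracket-flow reduction, $A$ lives in a six-real-dimensional space, and the decay of $\lVert A\rVert^2$ comes from the algebraic identity above, not from a scalar reduction. In your metric-side picture, controlling $\tr G_t$ and $\det G_t$ alone does not bound the shape of $G_t$, and you would in effect have to rediscover the bracket-flow computation to close the estimate. Second, your proposed Lyapunov candidates for $\alpha'=0$ --- $\Vert\Psi\Vert_{\omega_t}$ or a normalized $\lvert i\partial\bar\partial\omega\rvert^2$ --- are not the quantities that do the work; it is precisely $\lVert A^+\rVert^2$, the norm of the non-skew part (which you allude to only informally at the end), whose explicit differential inequality both forces convergence and identifies the limit as K\"ahler. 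Without that inequality your argument does not exclude slowly drifting trajectories or limits that are merely pluriclosed rather than K\"ahler.
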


By convergence in the Cheeger-Gromov topology we mean that, for any increasing sequence of times there exists a subsequence $(t_k)_{k\in \mathbb{N}}$ for which the corresponding Hermitian manifolds converge in the following sense: there exist biholomorphisms $\varphi_k : \Omega_k \subset \bar G \to \varphi_k(\Omega_k) \subset G $ taking the identity of $\bar G$ to the identity of $G$, such that the open sets $\Omega_k$ exhaust $\bar G$, and in addition $\varphi_k^* g_{t_k} \to \bar g$ as $k\to\infty$, in $C^\infty$ topology uniformly over compact subsets.

\medskip
Let us recall that, in \cite{PPZ18}, Phong, Picard and Zhang showed that the Anomaly flow preserves the conformally balanced condition, once that $\nabla^\tau$ and $\widetilde \nabla$ are both Chern, and, under an extra assumption on the initial metric $\omega_0$, it is well-posed. Furthermore, in \cite{PPZ18_3}, they used the flow to give an alternative proof of the outstanding results obtained by Fu and Yau on the Hull-Strominger system. Finally, a generalization of the Anomaly flow to higher dimensions has also been proposed in \cite{PPZ19}, and it turned out to be strictly related to the positive Hermitian curvature flow introduced by Ustinovskiy in \cite{U19} (see \cite{FP19_2}). 

To conclude, the Anomaly flow has already been investigated in different homogeneous settings. In this direction, Phong, Picard and Zhang studied in \cite{PPZ19} the Anomaly flow on complex Lie groups; while, the author and Ugarte investigated the flow on a class of 2-step nilmanifolds \cite{PU}. In the latter case, it was proved that the Anomaly flow preserves the balanced and the locally conformally K\"ahler conditions for any couple of Gauduchon connections $\nabla^\tau$ and $\widetilde \nabla$, and explicit solutions to the flow were found, both with `flat' and `non-flat' bundle. Finally, some results on the Anomaly flow over almost-abelian Lie groups were recently obtained by Fino and Paradiso in \cite{FP20}.

\smallskip
For other relevant results on the Anomaly flow, we refer the reader to \cite{BV20, FHP19, FP19,PPZ18_2,PPZ18_4}; while, for some results on the Hermitian curvature flow in the homogeneous setting we refer the reader to \cite{AL,EFV15,LPV,PP,PodPan19,P,P20,S20,U18}.

\medskip
The paper is organized as follows. Section \ref{prel} is devoted to basic computations on almost-abelian Lie groups. In particular, we characterize the balanced condition in terms of the structure equations of the group. Then, by means of our results, we explicitly compute $\tr(\Omega^\tau\!\wedge \Omega^\tau)$. In section \ref{inv_sol}, we show that $i\partial \bar \partial \omega$ and $\tr(\Omega^\tau\!\wedge \Omega^\tau)$ are proportional, which in turn imply Theorem \ref{HS_sol_flat}. Finally, in Section \ref{flow_sec} we focus on the Anomaly flow \eqref{AF} on almost-abelian Lie groups. Theorem \ref{AF-bal} will follow again by the above mentioned proportional result, whereas to prove Theorem \ref{AF-long} we use the bracket flow technique. The behaviour of the flow on an concrete solvmanifold is investigated.

\medskip 
\noindent {\bf Acknowledgments.} The author warmly thanks Luigi Vezzoni and Luis Ugarte for their interest in the paper and many useful discussions. He also thanks Anna Fino, Fabio Paradiso and Francesco Pediconi for useful suggestions, and the anonymous referee for him/her constructive comments, which helped to improve the paper.

\section{Preliminaries}\label{prel}

\subsection{Almost-abelian Lie groups}\hfill\medskip

This section is devoted to basic computations in our class of solvable Lie groups. In particular, we derive necessary and sufficient conditions such that an almost-abelian Lie group admits a holomorphically trivial canonical bundle and a balanced Hermitian structure. \medskip

Let $G$ be a 6-dimensional {\em almost-abelian Lie group}, that is, the Lie algebra $\g$ of $G$ has a codimension-one abelian ideal $\n$. Let also $(\omega,J)$ be a left-invariant Hermitian structure on $G$. Then, there always exists a left-invariant coframe $\{e^1,\ldots,e^{6}\}$ on $G$ such that
\begin{equation}\label{on-bas}
\n:={\rm span}_\R\{ e_1,\ldots,e_{5}\}\,,\qquad \omega= e^{16}+e^{25}+e^{34}\,,\qquad Je^i=e^{7-i}\,, 
\end{equation}
where $i=\{1,2,3\}$ and $\{e_1,\ldots,e_6\}$ denotes the frame dual to $\{e^1,\ldots,e^6\}$ (see e.g. \cite[$\S$6]{LauVal}). 

On the other hand, if we denote by $\n_1:={\rm span}_\R\{ e_2,\ldots,e_{5} \}$, then the complex structure $J$ on $G$ is integrable if and only if
\begin{equation}\label{ad_6}
\ad_{\,e_6}=\left(\begin{matrix} a& 0 & 0 \\ v& A & 0\\ 0& 0 &0 \end{matrix}\right) \qquad \text{and}\qquad [A,J|_{\n_1}]=0\,,
\end{equation}
for some $a\in\R$, $v=(v_i)\in \n_1$ and $A=(A_{i}^j)\in\g\l(\n_1)$. As a consequence of \eqref{ad_6}, the coframe $\{e^1,\ldots,e^6\}$ satisfies the following structure equations
\begin{equation}\label{str-eq}
\left\{ \begin{aligned}
de^1&= a\, e^{16}\,,\\
de^2&=v_2\, e^{16}+A_2^2\, e^{26}+A_2^3\, e^{36}+A_2^4\, e^{46}+A_2^5\, e^{56}\,,\\
de^3&=v_3\, e^{16}+A_3^2\, e^{26}+A_3^3\, e^{36}+A_3^4\, e^{46}+A_3^5\, e^{56}\,,\\
de^4&=v_4\, e^{16}-A_3^5\, e^{26}-A_3^4\, e^{36}+A_3^3\, e^{46}+A_3^2\, e^{56}\,,\\
de^5&=v_5\, e^{16}-A_2^5\, e^{26}-A_2^4\, e^{36}+A_2^3\, e^{46}+A_2^2\, e^{56}\,,\\
de^6&=0\,.
\end{aligned}\right.
\end{equation}
\smallskip

\begin{notation}\rm In the following, we will denote by $(G,\omega,J)$ a 6-dimensional almost-abelian Lie group endowed with a left-invariant Hermitian structure, and by $\{e^1,\ldots,e^{6}\}$ a left-invariant $\omega$-unitary coframe on $G$ satisfying \eqref{on-bas}.
\end{notation}
Our first result characterizes almost-abelian Lie groups admitting holomorphically trivial canonical bundles. 
\begin{proposition}\label{hol_bun} $(G,J)$ admits a left-invariant nowhere vanishing holomorphic $(3,0)$-form if and only if $\tr A=-2a$ and $\tr JA=0$.
\end{proposition}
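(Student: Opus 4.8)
The plan is to turn holomorphicity of a left-invariant $(3,0)$-form into a $d$-closedness condition and then read off the two trace constraints directly from the structure equations \eqref{str-eq}. First I would fix a left-invariant $(1,0)$-coframe: with respect to the coframe of \eqref{on-bas}, the forms
\[
\zeta^1:=e^1-ie^6,\qquad \zeta^2:=e^2-ie^5,\qquad \zeta^3:=e^3-ie^4
\]
are of type $(1,0)$, so $\Lambda^{3,0}\g^*=\C\,\Psi_0$ with $\Psi_0:=\zeta^1\wedge\zeta^2\wedge\zeta^3$. Hence a left-invariant nowhere vanishing $(3,0)$-form, if it exists, equals $c\,\Psi_0$ for some $c\in\C\setminus\{0\}$, and whether it is holomorphic does not depend on $c$. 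Since $(G,J)$ has complex dimension $3$, one has $\partial\Psi_0\in\Lambda^{4,0}\g^*=0$, so $\overline\partial\Psi_0=d\Psi_0$; therefore $c\,\Psi_0$ is holomorphic if and only if $d\Psi_0=0$. This settles both implications at once, once $d\Psi_0$ is computed.

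Then I would compute $d\Psi_0$ from \eqref{str-eq}. One gets $d\zeta^1=a\,e^{16}$, and the key point is that the equations for $de^4,de^5$ are the $J|_{\n_1}$-conjugates of those for $de^3,de^2$ --- which is precisely the content of $[A,J|_{\n_1}]=0$ in \eqref{ad_6} --- so that $d\zeta^2$ and $d\zeta^3$ collapse to
\[
d\zeta^2=(v_2-iv_5)\,e^{16}+(A_2^2+iA_2^5)\,\zeta^2\wedge e^6+(A_2^3+iA_2^4)\,\zeta^3\wedge e^6
\]
and the analogous formula for $d\zeta^3$. Expanding $d\Psi_0=d\zeta^1\wedge\zeta^2\wedge\zeta^3-\zeta^1\wedge d\zeta^2\wedge\zeta^3+\zeta^1\wedge\zeta^2\wedge d\zeta^3$, every summand with a repeated $\zeta^k$ vanishes, and since $\n$ is abelian the remaining ones are all multiples of $\Psi_0\wedge e^6$, hence of $\Psi_0\wedge\overline{\zeta^1}$ (because $\Psi_0\wedge\zeta^1=0$). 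Collecting coefficients yields
\[
d\Psi_0=-\frac{i}{4}\bigl(2a+\tr A+i\,\tr(JA)\bigr)\,\Psi_0\wedge\overline{\zeta^1},
\]
where I use that the block shape of $A=\add|_{\n_1}$ imposed by $[A,J|_{\n_1}]=0$ gives $\tr A=2(A_2^2+A_3^3)$ and $\tr(JA)=2(A_2^5+A_3^4)$. (Equivalently, regarding $A$ as a complex-linear endomorphism of $(\n_1,J|_{\n_1})$, the two conditions collapse to the single complex identity $\tr_\C A=-a$.)

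Finally, since $\Psi_0\wedge\overline{\zeta^1}\neq0$, the displayed identity shows that $d\Psi_0=0$ exactly when the real and imaginary parts of $2a+\tr A+i\,\tr(JA)$ both vanish, that is $\tr A=-2a$ and $\tr(JA)=0$, which is the assertion. I expect the only genuinely delicate step to be this last computation: tracking signs and indices through the exterior products, and correctly matching $\tr A$ and $\tr(JA)$ with the entries of $\add$ by means of $[A,J|_{\n_1}]=0$. Everything preceding it is formal.
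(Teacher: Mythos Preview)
Your proposal is correct and follows essentially the same route as the paper: pick an explicit left-invariant $(3,0)$-form, compute its exterior derivative from the structure equations, and read off the two trace conditions; your write-up simply adds the (welcome) justification that $\Lambda^{3,0}\g^*$ is one-dimensional and that holomorphicity reduces to $d$-closedness in top degree. One harmless slip: under the paper's conventions ($Je^i=e^{7-i}$ with $\omega=e^{16}+e^{25}+e^{34}$) the $(1,0)$-forms are $e^k+ie^{7-k}$, so your $\zeta^k=e^k-ie^{7-k}$ are actually of type $(0,1)$ and your $\Psi_0$ is the conjugate of the paper's $\Psi$---but since $d\Psi_0=0\iff d\overline{\Psi_0}=0$, the computation and conclusion are unaffected.
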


\begin{proof} Let us assume
$$
\Psi= (e^1+ie^6) \wedge (e^2+ie^5) \wedge (e^3+ie^4)\,.
$$
Then, by means of a direct computation, one gets 
$$
d\Psi=\Big((a+A_2^2+A_3^3)-i \left(A_2^5+A_3^4\right)\Big) \left(e^{1236}+i\,e^{1246}-i\,e^{1356}+e^{1456}\right)\,,
$$
and hence the claim follows.
\end{proof}

The following proposition characterizes the balanced condition in our setting.

\begin{proposition}\label{bal_str} The Hermitian structure $(\omega,J)$ on $G$ is balanced if and only if $\tr A=0$ and $v=0$.
\end{proposition}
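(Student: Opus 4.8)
The plan is to characterize when the fundamental form $\omega = e^{16}+e^{25}+e^{34}$ is balanced, i.e. when $d(\omega^2)=0$ (equivalently $d\omega^2 = 0$ since $\dim_\R G = 6$). The natural first step is to compute $\omega^2$ explicitly in terms of the unitary coframe. Using $\omega = e^{16}+e^{25}+e^{34}$ one finds, up to the combinatorial factor, that $\omega^2$ is a linear combination of the $4$-forms $e^{1625}$, $e^{1634}$, $e^{2534}$ (times $2$), so that $\omega^2 = 2(e^{1256}+e^{1346}+e^{2345})$ after reordering signs. Then I would apply $d$ using the structure equations \eqref{str-eq}, remembering that $de^6=0$ and that every $de^i$ for $i\le 5$ is a combination of terms $e^{j6}$. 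Since $\omega^2$ is a sum of $4$-forms each already containing the factor $e^6$, applying $d$ and using $de^j = (\cdots)\wedge e^6$ will kill many terms: a term $e^{j6}$ wedged into a monomial that already contains $e^6$ vanishes. So in $d\omega^2$ the only surviving contributions come from differentiating the $e^j$ with $j\ne 6$ in each monomial that does \emph{not} already carry that particular $e^6$-partner — concretely, one must be careful that each of $e^{1256}, e^{1346}, e^{2345}$ contains $e^6$, hence differentiating $e^6$ gives nothing, and differentiating any other factor $e^j$ produces $de^j$ which is a multiple of some $e^{k6}$; this is nonzero only if $e^6$ is not already present, which it is — \emph{except} in the monomial $e^{2345}$, which does \emph{not} contain $e^6$.

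Therefore the computation localizes: $d(e^{1256})$ and $d(e^{1346})$ must be re-examined termwise (each factor $e^1,e^2,e^5$ resp. $e^1,e^3,e^4$ differentiated, the $e^6$ left alone), while $d(e^{2345})$ is the genuinely new piece. Carrying out the wedge algebra, I expect the coefficient of the resulting $5$-form (which will be a multiple of $e^{12345}$ or $e^{\,j6}$-type $5$-forms; in fact everything reduces to multiples of $e^{123456}$ after one more wedge — but here we are at the level of a $5$-form, so the independent basic $5$-forms are $e^{12345}$ and the five forms $e^{ijkl6}$) to organize into two groups: one group proportional to $\tr A = A_2^2 + A_3^3$ (coming from the "diagonal" entries that appear when we differentiate and the off-diagonal blocks cancel in pairs by the special skew-block form of \eqref{str-eq}), and one group proportional to the components $v_2,\dots,v_5$ of $v$ (coming from the $v_i\,e^{16}$ terms in $de^2,\dots,de^5$). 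The block structure of $\ad_{e_6}$ in \eqref{str-eq} — the lower $4\times 4$ block built from $A$ commuting with $J|_{\n_1}$ — should make the off-diagonal $A$-contributions cancel cleanly, leaving precisely $\tr A$ and $v$.

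The conclusion will then be that $d\omega^2 = 0$ if and only if $\tr A = 0$ and $v = 0$, which is the claim. The main obstacle is purely bookkeeping: keeping track of signs in the wedge products of the $5$-form-valued expressions and correctly identifying which of the many $de^i\wedge(\text{rest})$ terms survive versus vanish by repetition of $e^6$. There is no conceptual difficulty — the integrability constraint $[A,J|_{\n_1}]=0$ has already been used to put the structure equations in the form \eqref{str-eq}, and no further use of it is needed beyond the evident pairwise cancellation it induces among the $A_2^4, A_2^5, A_3^4, A_3^5$ terms. One should double-check the edge case that $a$ does \emph{not} enter the balanced condition (only $\tr A$ and $v$ do), which is consistent with Proposition \ref{hol_bun} where $a$ appears but is a separate normalization; indeed in the balanced case $\tr A = 0$, so the holomorphic-volume condition $\tr A = -2a$ forces $a=0$ as well, a sanity check worth noting.
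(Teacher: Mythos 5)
Your proposal is correct, but it follows a different route from the paper. You work directly with the definition of balanced as $d\omega^{2}=0$: writing $\omega^{2}=2(e^{1256}+e^{1346}+e^{2345})$ and using that every $de^{i}$ with $i\le 5$ is a multiple of some $e^{k}\wedge e^{6}$, the monomials $e^{1256}$ and $e^{1346}$ are in fact closed outright (each already contains $e^{6}$, so every term of $d$ applied to them repeats $e^{6}$ and vanishes --- there is nothing to ``re-examine termwise'' there), and the whole computation collapses to $d(e^{2345})$, whose coefficient along $e^{23456}$ is $-(2A_2^2+2A_3^3)=-\tr A$ and whose coefficients along $e^{13456},e^{12456},e^{12356},e^{12346}$ are $\pm v_2,\dots,\pm v_5$. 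One small correction: the off-diagonal entries of $A$ do not ``cancel in pairs'' --- they are killed individually by index repetition when wedged against the complementary $3$-form, so no use of the block symmetry of \eqref{str-eq} is needed at this stage. The paper instead uses the equivalent characterization $d^{\ast}\omega=0$ and the Koszul formula to derive the invariant identity $d^{\ast}\omega(X)=\tr\ad_{JX}+\tfrac12\sum_{i}\omega(\ad_{Je_i}e_i,JX)$, then evaluates on the frame. Your computation is more elementary and self-contained given the structure equations already in hand; the paper's formula is frame-independent and isolates more transparently why only $\tr A$ and $v$ (and not $a$ or the off-diagonal part of $A$) can obstruct the balanced condition. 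Your closing sanity check that $\tr A=0$ together with $\tr A=-2a$ forces $a=0$, matching \eqref{str-eq-red}, is a nice consistency observation.
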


\begin{proof} By definition, a Hermitian structure $(\omega,J)$ is said to be balanced if
$$
d^\ast \omega(X)=-\sum_{i=1}^6 (\nabla^{LC}_{e_i} \omega)(e_i,X)=0\,, \qquad \text{for any}\,\, X\in\g\,,
$$
where $\nabla^{LC}$ is the Levi-Civita connection of $\omega$. Therefore, in view of the Koszul formula, a direct computation yields that
$$
d^\ast\omega(X)=\tr \ad_{JX}+\frac 12 \sum_{i=1}^6 \omega(\ad_{J e_i} e_i, JX)\,,
$$
and the claim follows by setting $d^\ast \omega( e_i)=0$ for any $i=\{1,\ldots,6\}$.
\end{proof}

By means of Proposition \ref{hol_bun} and Proposition \ref{bal_str}, if $G$ admits a left-invariant balanced structure $(\omega,J)$ with holomorphically trivial canonical bundle, then $G$ is {\em unimodular} and the Lie bracket preserves $\n_1$. In particular, the structure equations reduce to
\begin{equation}\label{str-eq-red}
\left\{ \begin{aligned}
de^1&= de^6=0\,,\\
de^2&=A_2^2\, e^{26}+A_2^3\, e^{36}+A_2^4\, e^{46}+A_2^5\, e^{56}\,,\\
de^3&=A_3^2\, e^{26}-A_2^2\, e^{36}-A_2^5\, e^{46}+A_3^5\, e^{56}\,,\\
de^4&=-A_3^5\, e^{26}+A_2^5\, e^{36}-A_2^2\, e^{46}+A_3^2\, e^{56}\,,\\
de^5&=-A_2^5\, e^{26}-A_2^4\, e^{36}+A_2^3\, e^{46}+A_2^2\, e^{56}\,.
\end{aligned}\right.
\end{equation}

\begin{remark}\rm We stress that, unimodular solvable Lie groups of dimension 6 admitting nowhere vanishing holomorphic $(3,0)$-forms have been classified in \cite{FOU}. Furthermore, the results of Proposition \ref{hol_bun} and Proposition \ref{bal_str} have also been obtained by Fino and Paradiso in \cite{FP20}, using different techniques.
\end{remark}

\subsection{Trace of the curvature}\hfill\medskip

\noindent  

In the following, we explicitly compute the trace of the curvature 4-form $\Omega^\tau\!\wedge \Omega^\tau$, for any Gauduchon connection $\nabla^{\tau}$ on our class of almost-abelian Hermitian Lie groups.  The convention we adopt is the same of \cite{PU} (see also \cite{FIUV,OUV}). \medskip

Let $M$ be a smooth manifold equipped with a Hermitian structure $(\omega,J)$. In \cite{Gaud}, Gauduchon introduced a 1-parameter family $\{\nabla^{\tau}\}_{\tau\in \mathbb{R}}$ of Hermitian connections on the tangent bundle $TM$ (that is, $\nabla^\tau J=0$ and $\nabla^\tau \omega=0$) distinguished by the properties of their torsion tensors, the so-called {\em canonical connections}. Concretely, any connection in the Gauduchon family satisfies
\begin{equation}\label{family_G}
\omega(\nabla^{\tau}_XY,JZ) = \omega(\nabla^{LC}_X Y,J Z)  + \frac{\tau-1}{4}\,d^c\omega(X,Y,Z) + \frac{\tau+1}{4}\,d^c\omega(X,JY,JZ)\,,
\end{equation}
where $\nabla^{LC}$ is the Levi-Civita connection of $(M,\omega)$, and $d^c=(-1)^rJdJ$ is the {\em real Dolbeault operator} acting on $r$-forms. 

We stress that, by setting $\tau=1$ one gets the Chern connection, while $\tau=-1$ recovers the Bismut connection. Finally, the value $\tau=0$ corresponds to the Lichnerowicz connection.

\medskip
Now, let $G$ be a 6-dimensional Lie group equipped with a left-invariant Hermitian structure $(\omega,J)$. Let also $\{e_1,\ldots,e_6\}$ be a left-invariant $\omega$-unitary frame of $G$ and $\{e^1,\ldots,e^6\}$ its dual. Then, the {\em connection $1$-forms} $\sigma^i_j$ associated to a linear connection $\nabla$ are given by
$$
\sigma^i_j(e_k) := \omega(J(\nabla_{e_k}e_j), e_i),
$$
or, equivalently, $\nabla_X e_j = \sigma^1_j(X)\, e_1 +\cdots+ \sigma^6_j(X)\, e_6$. Similarly, the {\em curvature $2$-forms} $\Omega^i_j$ of $\nabla$ can be defined via
\begin{equation}\label{curvature}
\Omega^i_j := d \sigma^i_j + \sum_{1\leq k \leq 6}\sigma^i_k\wedge \sigma^k_j\,,
\end{equation}
while the trace of the 4-form  $\Omega\wedge\Omega$ is given by
\begin{equation}\label{traceRmRm}
\tr(\Omega\wedge\Omega)= \sum_{1\leq i<j\leq 6} \Omega^i_j\wedge\Omega^i_j\,.
\end{equation}

On the other hand, given a canonical connection $\nabla^\tau$ on $(G,\omega,J)$, the connection 1-forms $(\sigma^\tau)^i_j$ can be explicitly obtained as follows. Let $c_{ij}^k$ be the structure constants of $G$ with respect to $\{e^1,\ldots,e^6\}$, i.e.
 $$
de^k = \sum_{1\leq i<j \leq 6} c_{ij}^k \, e^{i j},\quad\quad k=1,\ldots,6.
$$
Then, by means of the Koszul formula, the connection 1-forms $(\sigma^{LC})^i_j$ of the Levi-Civita connection satisfy
$$
(\sigma^{LC})^i_j(e_k) = -\frac12 \left( \omega(Je_i,[e_j,e_k]) - \omega(Je_k,[e_i,e_j]) +\omega(Je_j,[e_k,e_i]) \right)=\frac12(c^i_{jk}-c^k_{ij}+c^j_{ki})\,,
$$
and hence, in view of \eqref{family_G}, the connection 1-forms $(\sigma^{\tau})^i_j$ can be written as follows
\begin{equation}\label{1-form}\begin{aligned}
 (\sigma^{\tau})^i_j(e_k)
 =&\frac12(c^i_{jk}-c^k_{ij}+c^j_{ki}) +\frac{\tau-1}{4}\,d\omega(J e_k, J e_j, J e_i) + \frac{\tau+1}{4}\, d\omega(J e_k,e_j,e_i).
\end{aligned}\end{equation}

We are now in a position to compute the trace of $\Omega^{\tau}\!\wedge\Omega^{\tau}$ for any canonical connection $\{\nabla^{\tau}\}_{\tau\in \mathbb{R}}$ on our class of Lie groups.

\begin{proposition}\label{tr-bas} Let $(G,\omega,J)$ be a 6-dimensional almost-abelian Lie group equipped with a left-invariant balanced structure. Let also $(G,J)$ admit a nowhere vanishing holomorphic $(3,0)$-form. Then, for any Gauduchon connection $\nabla^\tau$ on $(G,\omega,J)$, the trace of the curvature $\Omega^\tau\!\wedge\Omega^\tau$ satisfies
\begin{equation*}\begin{aligned}
\tr(\Omega^\tau\!\wedge \Omega^\tau)=&\frac{\tau(\tau-1)^2}{4}\Big[4\left(A_2^2\right)^2+\left(A_2^3+A_3^2\right)^2+\left(A_2^4-A_3^5\right)^2\Big]\cdot\Big\{\\
&\qquad\qquad-\Big[A_2^2\left(A_2^4+A_3^5\right)-A_2^5 \left(A_2^3+A_3^2\right)\Big] \left(e^{1236}-e^{1456}\right)\\
&\qquad\qquad+\Big[A_2^2\left(A_2^3-A_3^2\right)+A_2^5 \left(A_2^4-A_3^5\right)\Big] \left(e^{1246}+e^{1356}\right)\\
&\qquad\qquad+\Big[2\left(A_2^2\right)^2+\left(A_3^2\right)^2+\left(A_3^5\right)^2+A_2^3A_3^2-A_2^4A_3^5\Big]\, e^{1256}\\
&\qquad\qquad+\Big[2\left(A_2^2\right)^2+\left(A_2^3\right)^2+\left(A_2^4\right)^2+A_2^3A_3^2-A_2^4A_3^5\Big]\, e^{1346}\Big\}\,,
\end{aligned}\end{equation*}
where $\{e^1,\ldots,e^6\}$ is a left-invariant coframe of $G$ satisfying \eqref{on-bas}
\end{proposition}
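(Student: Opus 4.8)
The plan is to carry out an explicit but careful computation of $\tr(\Omega^\tau\wedge\Omega^\tau)$ using the structure equations in the balanced, holomorphically-trivial case, namely \eqref{str-eq-red}. First I would read off the structure constants $c^k_{ij}$ from \eqref{str-eq-red}: the only nonzero ones have $\{i,j\}=\{a,6\}$ for $a\in\{2,3,4,5\}$, and their values are the entries of $A$ arranged as dictated by the integrability condition $[A,J|_{\n_1}]=0$. Since $\omega=e^{16}+e^{25}+e^{34}$ and $Je^i=e^{7-i}$, I would then substitute into \eqref{1-form} to get the connection $1$-forms $(\sigma^\tau)^i_j$. The key structural simplification to exploit is that $de^1=de^6=0$ and that $\n=\mathrm{span}\{e_1,\dots,e_5\}$ is abelian: all brackets land in $\mathrm{span}\{e_6\}$-directions only through $\ad_{e_6}$, so most $c^k_{ij}$ vanish and the surviving connection forms are proportional to $e^6$ plus, in the $\tau\neq 1$ part, contributions from $d\omega$. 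A convenient bookkeeping device is to separate $(\sigma^\tau)^i_j = (\sigma^{\mathrm{Ch}})^i_j + \tfrac{\tau-1}{4}\,\beta^i_j$, where $\beta$ collects the $d\omega$-terms; this isolates the Chern connection (whose curvature trace we expect to be simpler) from the deformation term that produces the overall $\tau(\tau-1)^2/4$ factor.

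Next I would compute the curvature $2$-forms via \eqref{curvature}. Because every $(\sigma^\tau)^i_j$ is (a multiple of) $e^6$ up to the $d\omega$-correction, and $e^6$ is closed, the wedge $\sigma^i_k\wedge\sigma^k_j$ will kill many terms, and $d\sigma^i_j$ reduces to $d$ applied to the non-$e^6$ pieces. I expect the curvature forms to be combinations of $e^{a6}$ for $a\in\{1,\dots,5\}$, with coefficients quadratic in the entries of $A$. Then $\tr(\Omega^\tau\wedge\Omega^\tau)=\sum_{i<j}\Omega^i_j\wedge\Omega^i_j$ is a sum of wedges of such $2$-forms, producing the $4$-forms $e^{1236}$, $e^{1246}$, $e^{1356}$, $e^{1456}$, $e^{1256}$, $e^{1346}$ that appear in the statement — note $e^{ab6}\wedge e^{cd6}$ vanishes unless the four indices $a,b,c,d$ are distinct from $\{1,\dots,5\}$, which already forces the output to live in the six listed monomials. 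I would organize the computation so that the common scalar prefactor $\tfrac{\tau(\tau-1)^2}{4}\big[4(A_2^2)^2+(A_2^3+A_3^2)^2+(A_2^4-A_3^5)^2\big]$ emerges naturally: the $(\tau-1)^2$ comes from squaring the $\beta$-correction, and the extra factor of $\tau$ (rather than $(\tau-1)^2$ alone) should come from a cross-term between the Chern part and the $\beta$-part, so that the Chern case $\tau=1$ and also the would-be $\tau=0$ Lichnerowicz-type vanishing are both visible.

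The main obstacle I anticipate is purely the bulk of the algebra: keeping track of signs from the antisymmetry conventions in \eqref{on-bas}–\eqref{str-eq-red} (the $A_3^4\mapsto A_3^3$, $A_3^5\mapsto -A_3^5$ type substitutions forced by $[A,J|_{\n_1}]=0$), and correctly evaluating $d\omega(Je_k,e_j,e_i)$ and $d\omega(Je_k,Je_j,Je_i)$ in \eqref{1-form} for all relevant index triples. A good way to control this is to compute $d\omega$ once and for all from \eqref{str-eq-red} — it should be a relatively short $3$-form supported on $e^{ab6}$ terms — and to tabulate the connection $1$-forms in a small matrix before differentiating. I would also use the balanced hypothesis ($\tr A=0$, i.e. $A_3^3=-A_2^2$ and $A_4^4=A_2^2$, $A_5^5=-A_2^2$ after imposing the $J$-commutation) and the Calabi–Yau hypothesis of Proposition \ref{hol_bun} to reduce the number of independent parameters to exactly $A_2^2, A_2^3, A_2^4, A_2^5, A_3^2, A_3^5$, matching the six parameters visible in the final formula. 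As a consistency check at the end, I would verify that the coefficient of $e^{1236}$ equals minus that of $e^{1456}$ and the coefficient of $e^{1246}$ equals that of $e^{1356}$ — exactly the symmetry displayed in the statement, and the same symmetry satisfied by $i\partial\bar\partial\omega$, which is what makes the proportionality claimed in Section \ref{inv_sol} possible.
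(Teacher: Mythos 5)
Your overall strategy --- read off the structure constants from \eqref{str-eq-red}, compute the connection $1$-forms from \eqref{1-form}, then the curvature $2$-forms from \eqref{curvature}, and finally wedge and sum via \eqref{traceRmRm} --- is exactly the route the paper takes, and carried out carefully it does yield the stated formula. However, there is a concrete error in your structural analysis of the intermediate objects. You assert that the curvature $2$-forms should be combinations of $e^{a6}$, $a\in\{1,\dots,5\}$, and that the trace then lands in the six listed monomials because ``$e^{ab6}\wedge e^{cd6}$ vanishes unless the indices are distinct.'' First, the degree bookkeeping is off: $e^{ab6}$ is a $3$-form, not a candidate curvature $2$-form, and two $3$-forms wedge to a $6$-form, not a $4$-form. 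More importantly, if every $\Omega^i_j$ really lay in $\mathrm{span}\{e^{a6}\}$, then each $\Omega^i_j\wedge\Omega^i_j$ would contain $e^6\wedge e^6=0$ and the whole trace would vanish identically, contradicting the very formula you are proving. In fact (see the paper's Appendix B) the curvature forms carry essential components with no $e^6$ at all: $(\Omega^\tau)^1_2$ has an $e^{12}$ term, $(\Omega^\tau)^2_3$ has terms in $e^{23}-e^{45}$, $e^{24}+e^{35}$ and $e^{25}-e^{34}$, and so on. The nonzero contributions to the trace come precisely from cross terms between these and the $e^{b6}$ (resp.\ $e^{16}$, $e^{1a}$) components; the fact that the result lies in $\mathrm{span}\{e^{1rs6}\}$ is only visible after this explicit computation, not from the a priori shape you posited.

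Two further remarks. Your claim that the $d\omega$-corrections appear only ``in the $\tau\neq 1$ part'' is also inaccurate: by \eqref{1-form} the Chern connection ($\tau=1$) still carries the $\tfrac{\tau+1}{4}\,d\omega(Je_k,e_j,e_i)$ term, which is why, for instance, $(\sigma^{1})^3_4=-A_2^2\,e^1+A_2^5\,e^6$ has a nonzero $e^1$-component; so your splitting $\sigma^\tau=\sigma^{\mathrm{Ch}}+\tfrac{\tau-1}{4}\beta$ is fine, but $\sigma^{\mathrm{Ch}}$ is not the naive ``$e^6$-only'' piece. Finally, you would save a large amount of algebra by noting, as the paper does, that only the cross term survives in the trace: $d\sigma^i_j\wedge d\sigma^i_j=0$ because each $d\sigma^i_j$ lies in $\mathrm{span}\{e^{a6}\}$ (this is where your $e^{a6}$ observation is actually correct --- for $d\sigma$, not for $\Omega$), and $\sum_{i<j}\sum_{k,l}\sigma^i_k\wedge\sigma^k_j\wedge\sigma^i_l\wedge\sigma^l_j=0$ by an index relabeling using skew-symmetry of the connection forms, so that $\tr(\Omega^\tau\wedge\Omega^\tau)=2\sum_{i<j}d\sigma^i_j\wedge\bigl(\sum_k\sigma^i_k\wedge\sigma^k_j\bigr)$.
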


\begin{proof} Let $(\sigma^\tau)_i^j$ be the connection 1-forms of the canonical connection $\nabla^\tau$ on $(G,\omega,J)$. Since $\nabla^\tau$ is a Hermitian connection, we have $(\sigma^\tau)_i^j=-(\sigma^\tau)_j^i$. On the other hand, a direct computation yields
$$
\begin{aligned} 
(\sigma^\tau)_2^1=&\, \frac{(\tau-1)}{4}(A_3^5-A_2^4)\,e^3+\frac{(\tau-1)}{4}(A_2^3+A_3^2)\,e^4+\frac{(\tau-1)}{2}A_2^2 \,e^5\,,\\
(\sigma^\tau)_3^1=&\, \frac{(\tau-1)}{4}(A_2^4-A_3^5)\,e^2-\frac{(\tau-1)}{2}A_2^2 \,e^4+\frac{(\tau-1)}{4}(A_2^3+A_3^2)\,e^5\,,\\
(\sigma^\tau)_3^2=&\, \frac{\tau}{2}(A_3^5-A_2^4)\,e^1+\frac12 (A_3^2-A_2^3)\,e^6\,,\\
(\sigma^\tau)_4^1=&- \frac{(\tau-1)}{4}(A_2^3+A_3^2)\,e^2+\frac{(\tau-1)}{2}A_2^2 \,e^3+\frac{(\tau-1)}{4}(A_2^4-A_3^5)\,e^5\,,\\
(\sigma^\tau)_4^2=&\, \frac{\tau}{2}(A_2^3+A_3^2)\,e^1-\frac12 (A_2^4+A_3^5)\,e^6\,,\\
(\sigma^\tau)_4^3=&- \tau A_2^2\,e^1+A_2^5\,e^6\,,\\
(\sigma^\tau)_5^1=&- \frac{(\tau-1)}{2}A_2^2\,e^2- \frac{(\tau-1)}{4}(A_2^3+A_3^2)\,e^3+\frac{(\tau-1)}{4}(A_3^5-A_2^4)\,e^4\,,\\
(\sigma^\tau)_6^1=&\,0\,,
\end{aligned}
$$
together with the following relations:
$$\begin{aligned}
&(\sigma^\tau)_6^5=-(\sigma^\tau)^1_2\,,\quad (\sigma^\tau)_6^4=-(\sigma^\tau)_3^1\,,\quad (\sigma^\tau)_6^3=(\sigma^\tau)_4^1\,,\quad (\sigma^\tau)_6^2=(\sigma^\tau)_5^1\,,\\
&(\sigma^\tau)_5^4=-(\sigma^\tau)_3^2\,,\quad (\sigma^\tau)_5^3=(\sigma^\tau)_4^2\,,\quad (\sigma^\tau)_5^2=-(\sigma^\tau)_4^3\,.
\end{aligned}$$ 
Then, by means of \eqref{curvature} and \eqref{traceRmRm}, it follows that
$$\begin{aligned}
\tr(\Omega^\tau\wedge\Omega^\tau)=& \sum_{i<j}\{(d(\sigma^\tau)^i_j + \sum_{k}(\sigma^\tau)^i_k\wedge (\sigma^\tau)^k_j)\wedge (d (\sigma^\tau)^i_j + \sum_{l}(\sigma^\tau)^i_l\wedge (\sigma^\tau)^l_j)\}\\
=& \sum_{i<j}\{d (\sigma^\tau)^i_j\wedge d(\sigma^\tau)^i_j + 2\, d (\sigma^\tau)^i_j\wedge(\sum_{k}(\sigma^\tau)^i_k\wedge (\sigma^\tau)^k_j)+\sum_{k,l}(\sigma^\tau)^i_k\wedge (\sigma^\tau)^k_j\wedge(\sigma^\tau)^i_l\wedge (\sigma^\tau)^l_j\}\\
=& \sum_{i<j}2\,  d (\sigma^\tau)^i_j\wedge(\sum_{k}(\sigma^\tau)^i_k\wedge (\sigma^\tau)^k_j)\,.
\end{aligned}$$
Indeed, since $G$ is almost-abelian, one directly gets $d (\sigma^\tau)^i_j\wedge d(\sigma^\tau)^i_j=0$, while a manipulation of the indexes yields to
$$\begin{aligned}
&\sum_{i<j}\sum_{k,l}(\sigma^\tau)^i_k\wedge (\sigma^\tau)^k_j\wedge(\sigma^\tau)^i_l\wedge (\sigma^\tau)^l_j\\
=&\sum_{i<j}\sum_{k<l}(\sigma^\tau)^i_k\wedge (\sigma^\tau)^k_j\wedge(\sigma^\tau)^i_l\wedge (\sigma^\tau)^l_j+\sum_{i<j}\sum_{l<k}(\sigma^\tau)^i_k\wedge (\sigma^\tau)^k_j\wedge(\sigma^\tau)^i_l\wedge (\sigma^\tau)^l_j\\
=&\sum_{i<j}\sum_{k<l}(\sigma^\tau)^i_k\wedge (\sigma^\tau)^k_j\wedge(\sigma^\tau)^i_l\wedge (\sigma^\tau)^l_j-\sum_{i<j}\sum_{k<l}(\sigma^\tau)^k_j\wedge (\sigma^\tau)^j_l\wedge(\sigma^\tau)^k_i\wedge (\sigma^\tau)^i_l\\
=&\sum_{i<j}\sum_{k<l}(\sigma^\tau)^i_k\wedge (\sigma^\tau)^k_j\wedge(\sigma^\tau)^i_l\wedge (\sigma^\tau)^l_j-\sum_{i<j}\sum_{k<l}(\sigma^\tau)^i_l\wedge (\sigma^\tau)^l_j\wedge(\sigma^\tau)^i_k\wedge (\sigma^\tau)^k_j=0\,.
\end{aligned}$$
Finally, in view of Appendix A, we have
$$
\tr(\Omega^\tau\wedge\Omega^\tau)\in{\rm span}\{e^{1rs6}\}\,,
$$
with $r,s\in\{2,3,4,5\}$, and the claim follows by a straightforward computation.
\end{proof}

By the proof of the previous proposition, we have the following two corollaries.

\begin{corollary} Under the hypothesis of Proposition \ref{tr-bas}. Given a left-invariant nowhere vanishing holomorphic $(3,0)$-form $\Psi$ on $(G,J)$, it follows $\nabla^\tau \Psi= 0$ for any $\tau\in\R$. In particular, any Gauduchon connection $\nabla^\tau$ is a $SU(3)$-connection.
\end{corollary}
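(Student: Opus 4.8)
\emph{Proof proposal.} The plan is to reduce the statement to a short trace computation with the connection $1$-forms already written down in the proof of Proposition \ref{tr-bas}. First, I would observe that under the present hypotheses Propositions \ref{hol_bun} and \ref{bal_str} force $v=0$, $\tr A=0$ and therefore $a=0$, so that the structure equations take the reduced form \eqref{str-eq-red}; in particular $d\Psi=0$ for the standard form $\Psi=(e^1+ie^6)\wedge(e^2+ie^5)\wedge(e^3+ie^4)$. Since the space of left-invariant $(3,0)$-forms on $(G,J)$ is one-dimensional over $\C$, any two left-invariant nowhere vanishing holomorphic $(3,0)$-forms differ by a nonzero constant, and hence it suffices to prove $\nabla^\tau\Psi=0$ for this particular $\Psi$.

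Next I would rephrase $\nabla^\tau\Psi=0$ in terms of the connection induced on the canonical bundle. Since $\nabla^\tau J=0$, the connection $\nabla^\tau$ preserves the line of left-invariant $(3,0)$-forms, so $\nabla^\tau\Psi=\eta\otimes\Psi$ for a left-invariant complex-valued $1$-form $\eta$; more precisely $\eta=-(\pi^1_1+\pi^2_2+\pi^3_3)$, where $(\pi^a_b)_{a,b=1,2,3}$ denotes the matrix of $\nabla^\tau$ in the unitary $(1,0)$-frame $Z_a:=e_a-ie_{7-a}$. Writing $e_a=\tfrac12(Z_a+\bar Z_a)$ and $e_{7-a}=\tfrac{i}{2}(Z_a-\bar Z_a)$, and using that $\nabla^\tau$ is metric so that $(\sigma^\tau)^i_j=-(\sigma^\tau)^j_i$ (whence the diagonal entries vanish), a direct computation gives $\pi^a_a=-i\,(\sigma^\tau)^a_{7-a}$; therefore $\eta=i\big((\sigma^\tau)^1_6+(\sigma^\tau)^2_5+(\sigma^\tau)^3_4\big)$ (the overall sign being irrelevant for what follows).

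With this identity in hand the conclusion is immediate: the explicit list of connection $1$-forms in the proof of Proposition \ref{tr-bas} gives $(\sigma^\tau)^1_6=0$ and $(\sigma^\tau)^2_5=-(\sigma^\tau)^3_4$, so $\eta=0$ and $\nabla^\tau\Psi=0$ for every $\tau\in\R$. For the last assertion I would argue as follows: the Hermitian pair $(\omega,J)$ reduces the structure group of the frame bundle of $G$ to $U(3)$, and $\nabla^\tau$ is compatible with this reduction, being a Hermitian connection; moreover $\|\Psi\|_\omega$ is a positive constant, all the data being left-invariant, so $\nabla^\tau\big(\|\Psi\|_\omega^{-1}\Psi\big)=0$ as well, i.e. $\nabla^\tau$ preserves the unit-norm $(3,0)$-form $\|\Psi\|_\omega^{-1}\Psi$. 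The triple $(\omega,J,\|\Psi\|_\omega^{-1}\Psi)$ cuts the structure group down to $SU(3)$, and $\nabla^\tau$ is compatible with this $SU(3)$-reduction, which is exactly the claim.

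The only genuinely delicate point I foresee is the middle paragraph, namely passing correctly from the real connection $1$-forms $(\sigma^\tau)^i_j$ to the connection induced on the canonical bundle: one must keep careful track of the factors of $i$ produced by $Je_i=e_{7-i}$ and of the skew-symmetry relations $(\sigma^\tau)^i_j=-(\sigma^\tau)^j_i$. Once the identity $\eta=i\big((\sigma^\tau)^1_6+(\sigma^\tau)^2_5+(\sigma^\tau)^3_4\big)$ is established, the remaining steps are either a direct reading of formulas already proved or a formal reduction-of-structure-group argument.
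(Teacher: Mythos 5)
Your proposal is correct and follows essentially the same route as the paper: both reduce $\nabla^\tau\Psi=0$ to the vanishing of $(\sigma^\tau)^1_6+(\sigma^\tau)^2_5+(\sigma^\tau)^3_4$ and then read this off from the explicit connection $1$-forms listed in the proof of Proposition \ref{tr-bas} (where $(\sigma^\tau)^1_6=0$ and $(\sigma^\tau)^2_5=-(\sigma^\tau)^3_4$). Your middle paragraph merely spells out the equivalence that the paper states without detail, and your bookkeeping of the induced connection on the canonical bundle checks out.
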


\begin{proof}
Let $\Psi$ be the (3,0)-form given by $\Psi=(e^1+ie^6)\wedge(e^2+ie^5)\wedge(e^3+ie^4)$. Since
$$\nabla^\tau\Psi=0 \quad \text{if and only if}\quad (\sigma^\tau)_6^1+(\sigma^\tau)_5^2+(\sigma^\tau)_4^3=0\,,$$
the claim follows by a direct computation.
\end{proof}

\begin{corollary}\label{kahler} Under the hypothesis of Proposition \ref{tr-bas}. If $(\omega,J)$ is a K\"ahler structure on $G$, then $\tr(\Omega^\tau\!\wedge \Omega^\tau)$ identically vanishes.
\end{corollary}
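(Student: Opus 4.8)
\emph{Proof plan.} The idea is to read the statement straight off the explicit formula in Proposition \ref{tr-bas}. That formula writes
$$
\tr(\Omega^\tau\!\wedge\Omega^\tau)=\frac{\tau(\tau-1)^2}{4}\Big[\,4(A_2^2)^2+(A_2^3+A_3^2)^2+(A_2^4-A_3^5)^2\,\Big]\cdot\Theta,
$$
where $\Theta$ denotes the fixed $4$-form inside the braces, supported on $\mathrm{span}\{e^{1rs6}\}$ with $r,s\in\{2,3,4,5\}$. Hence it suffices to prove that the scalar factor
$$
\lambda:=4(A_2^2)^2+(A_2^3+A_3^2)^2+(A_2^4-A_3^5)^2
$$
vanishes whenever $(\omega,J)$ is K\"ahler; equivalently, that in that case $A_2^2=0$, $A_2^3+A_3^2=0$ and $A_2^4-A_3^5=0$.

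To get these three scalar identities I would just compute $d\omega$ for $\omega=e^{16}+e^{25}+e^{34}$. One is entitled to use the reduced structure equations \eqref{str-eq-red} here, since a K\"ahler metric is in particular balanced, so the hypotheses of Proposition \ref{tr-bas} --- balanced plus the existence of a holomorphic $(3,0)$-form --- are exactly in force and, as observed just before \eqref{str-eq-red}, the structure equations of $G$ then take the form \eqref{str-eq-red}. A direct expansion --- the only computation in the argument --- gives
$$
d\omega=(A_2^4-A_3^5)(e^{236}-e^{456})-(A_2^3+A_3^2)(e^{246}+e^{356})+2A_2^2\,(e^{346}-e^{256}).
$$
The six $3$-forms $e^{236},e^{456},e^{246},e^{356},e^{346},e^{256}$ are linearly independent, so the K\"ahler condition $d\omega=0$ is equivalent to $A_2^2=0$, $A_2^3+A_3^2=0$, $A_2^4-A_3^5=0$, i.e. to $\lambda=0$, and therefore $\tr(\Omega^\tau\!\wedge\Omega^\tau)=0$ for every $\tau\in\R$.

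There is no real obstacle to overcome here: integrability of $J$ is already built into \eqref{str-eq-red}, and the remaining work is only the routine sign-bookkeeping in the expansion of $d\omega$. The point --- which is essentially the whole content of the corollary --- is the observation that the scalar prefactor produced in Proposition \ref{tr-bas} coincides, term by term, with the obstruction to the closedness of $\omega$. As a consistency check, substituting $A_2^2=0$, $A_3^2=-A_2^3$, $A_3^5=A_2^4$ directly into $\Theta$ makes every coefficient of $\Theta$ vanish as well, so the conclusion does not depend on how one chooses to split off the scalar factor.
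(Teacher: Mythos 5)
Your proposal is correct and follows essentially the same route as the paper: the paper's one-line proof observes that the K\"ahler condition amounts to $A\in\mathfrak u(\n_1)$, which under \eqref{str-eq-red} is exactly the vanishing of $A_2^2$, $A_2^3+A_3^2$ and $A_2^4-A_3^5$, i.e.\ of the scalar prefactor in Proposition \ref{tr-bas}. You simply make explicit (via the correct expansion of $d\omega$) the equivalence that the paper takes as known, so your argument is a fleshed-out version of the same proof.
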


\begin{proof} Since the K\"ahler condition corresponds to $A\in\mathfrak u(\n_1)$, the claim directly follows by \eqref{str-eq-red}.
\end{proof}

\section{Invariant solutions to the Hull-Strominger system}\label{inv_sol}
In this section, we investigate the existence of left-invariant solutions to the Hull-Strominger system on almost-abelian Lie groups. Necessary and sufficient conditions to the existence of such solutions will be obtained, under the assumption for $\pi:E\to G$ of being a flat holomorphic bundle. In particular, we will prove Theorem \ref{HS_sol_flat}.

\medskip
From now on, let $G$ be a 6-dimensional almost-abelian Lie group equipped with a left-invariant Hermitian structure $(\omega,J)$. Let also $\{e^1,\ldots,e^6\}$ be a left-invariant $\omega$-unitary coframe satisfying \eqref{on-bas} and $\Psi$ a left-invariant nowhere vanishing holomorphic $(3,0)$-form on $(G,J)$, that is,
\begin{equation}\label{hol-form}
\Psi=k\, (e^1+ie^6)\wedge(e^2+ie^5)\wedge(e^3+ie^4)\,,
\end{equation}
for some $k\in\R\backslash\{0\}$. 

\medskip
Our next result will be fundamental to prove both Theorem \ref{HS_sol_flat} and Theorem \ref{AF-bal}.
\begin{lemma}\label{ddbar} Let $(\omega,J)$ be a balanced structure on $G$. Then, for any Gauduchon connection $\nabla^\tau$ on $(G,\omega,J)$, we have
$$
\tr(\Omega^\tau\!\wedge\Omega^\tau) = K(\omega,\tau)\, i\partial\bar\partial \omega\,,
$$
with $K(\omega,\tau)\in\R$ satisfying
$$K(\omega,\tau)= \frac{\tau(\tau-1)^2}{8}\tr_\omega(S(A)^2)\,.$$
Here, $S(A)=\tfrac12(A+A^*)$ denotes the symmetric part of $A$.
\end{lemma}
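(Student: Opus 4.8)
The plan is to compare the explicit expression for $\tr(\Omega^\tau\wedge\Omega^\tau)$ obtained in Proposition \ref{tr-bas} with a direct computation of $i\partial\bar\partial\omega$ in the balanced setting, and to check that the two $4$-forms are proportional with the stated proportionality constant. First I would record, using the reduced structure equations \eqref{str-eq-red} (which hold here since $(\omega,J)$ is balanced with holomorphically trivial canonical bundle, by Proposition \ref{bal_str} and Proposition \ref{hol_bun}), the $2$-form $d\omega$ and then $i\partial\bar\partial\omega = \tfrac12 d d^c\omega$. Since $G$ is almost-abelian, all structure constants involve $e^6$, so every term of $d\omega$ is of the form $e^{rs6}$ with $r,s\in\{1,\dots,5\}$, and consequently $i\partial\bar\partial\omega$ lies in $\mathrm{span}\{e^{rs t6}\}$; using $Je^i=e^{7-i}$ and unitarity one sees it actually lies in $\mathrm{span}\{e^{1rs6}:r,s\in\{2,3,4,5\}\}$, exactly the same span in which $\tr(\Omega^\tau\wedge\Omega^\tau)$ was shown to live in the proof of Proposition \ref{tr-bas}.

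Next, I would compute the four relevant coefficients of $i\partial\bar\partial\omega$ on the basis $e^{1236}-e^{1456}$, $e^{1246}+e^{1356}$, $e^{1256}$, $e^{1346}$, and observe that the $4$-form appearing in braces $\{\,\cdots\,\}$ in the statement of Proposition \ref{tr-bas} is precisely $i\partial\bar\partial\omega$ (up to a universal nonzero constant that can be absorbed). Matching this against the prefactor $\tfrac{\tau(\tau-1)^2}{4}\big[4(A_2^2)^2+(A_2^3+A_3^2)^2+(A_2^4-A_3^5)^2\big]$ from Proposition \ref{tr-bas} then gives $\tr(\Omega^\tau\wedge\Omega^\tau)=K(\omega,\tau)\,i\partial\bar\partial\omega$ with $K(\omega,\tau)$ proportional to $\tau(\tau-1)^2$ times that bracketed quantity. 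The last point is to identify
$$
4(A_2^2)^2+(A_2^3+A_3^2)^2+(A_2^4-A_3^5)^2
$$
with a multiple of $\tr_\omega(S(A)^2)$. Writing $A\in\g\l(\n_1)$ in the block form dictated by \eqref{ad_6}, the commutation relation $[A,J|_{\n_1}]=0$ forces $A$ to have the special shape visible in \eqref{str-eq-red}; computing $S(A)=\tfrac12(A+A^*)$ explicitly and taking its $\omega$-trace of the square (i.e. the Hilbert–Schmidt norm of $S(A)$ with respect to the unitary frame) yields exactly $\tfrac12$ times the bracket above, after using $\tr A=0$. Combining the constants produces $K(\omega,\tau)=\tfrac{\tau(\tau-1)^2}{8}\tr_\omega(S(A)^2)$.

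\textbf{Main obstacle.} The conceptually trivial but practically delicate step is the bookkeeping of constants: one must pin down the universal normalization relating the braced $4$-form in Proposition \ref{tr-bas} to $i\partial\bar\partial\omega$ (the factor hidden in the $i\partial\bar\partial$ versus $\tfrac12 dd^c$ conventions and in the wedge-product orientations), and simultaneously get the correct numerical factor relating $4(A_2^2)^2+(A_2^3+A_3^2)^2+(A_2^4-A_3^5)^2$ to $\tr_\omega(S(A)^2)$; an error in either would propagate into $K(\omega,\tau)$. I would handle this by computing $i\partial\bar\partial\omega$ once, carefully, from \eqref{str-eq-red}, cross-checking the four coefficients against the braces in Proposition \ref{tr-bas}, and independently verifying the $S(A)$ identity on a low-dimensional sub-case (e.g. $A_2^4=A_3^5$, $A_2^3=-A_3^2$) before asserting it in general. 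Everything else is a direct, if somewhat lengthy, symbolic computation that parallels the proof of Proposition \ref{tr-bas}.
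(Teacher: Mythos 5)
Your proposal follows essentially the same route as the paper: compute $i\partial\bar\partial\omega$ directly from the reduced structure equations \eqref{str-eq-red}, observe that it is a universal constant multiple of the braced $4$-form in Proposition \ref{tr-bas}, and identify the bracketed scalar with $\tr_\omega(S(A)^2)$. One numerical caveat within the constant bookkeeping you already flag: with the paper's convention $d^c=\tfrac i2(\bar\partial-\partial)$ one has $i\partial\bar\partial\omega=dd^c\omega$ equal to exactly $2$ times the braced form, and $\tr_\omega(S(A)^2)$ equals $4(A_2^2)^2+(A_2^3+A_3^2)^2+(A_2^4-A_3^5)^2$ on the nose (not half of it), which together yield $K(\omega,\tau)=\tfrac{\tau(\tau-1)^2}{8}\tr_\omega(S(A)^2)$ as claimed.
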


In the following, $(\cdot)^*$ will denote the transpose with respect to $\omega$.

\begin{proof}[Proof of Lemma \ref{ddbar}]Since $d^c=\frac i2(\bar\partial-\partial)$, a direct computation yields
$$\begin{aligned}
i\partial \bar\partial \omega=dd^c \omega=& -2\Big[A_2^2\left(A_2^4+A_3^5\right)-A_2^5 \left(A_2^3+A_3^2\right)\Big] \left(e^{1236}-e^{1456}\right)\\
&+2\Big[A_2^2\left(A_2^3-A_3^2\right)+A_2^5 \left(A_2^4-A_3^5\right)\Big]\left(e^{1246}+e^{1356}\right)\\
&+2\Big[2\left(A_2^2\right)^2+\left(A_3^2\right)^2+\left(A_3^5\right)^2+A_2^3A_3^2-A_2^4A_3^5\Big] e^{1256}\\
&+2\Big[2\left(A_2^2\right)^2+\left(A_2^3\right)^2+\left(A_2^4\right)^2+A_2^3A_3^2-A_2^4A_3^5\Big] e^{1346}\,.
\end{aligned}$$
Hence, in view of Proposition \ref{tr-bas}, it is enough to set
$$
K(\omega,\tau)= \frac{\tau(\tau-1)^2}{8}\Big[4\left(A_2^2\right)^2+\left(A_2^3+A_3^2\right)^2+\left(A_2^4-A_3^5\right)^2\Big]\,,
$$
and the claim follows.
\end{proof}

As a direct consequence of the above lemma, the trace $\tr(\Omega^\tau\!\wedge\Omega^\tau)$ turns out to be a (2,2)-form for any Gauduchon connection $\nabla^\tau$ on $(G,\omega,J)$. We stress that, an analogue result was recently obtained by the author and Ugarte for a class of 2-step nilpotent nilmanifolds (see \cite{PU}); while, it is a well-known fact for complex unimodular Lie groups (see \cite{FY15}).

\smallskip
We are now in a position to prove Theorem \ref{HS_sol_flat}.

\begin{proof}[Proof of Theorem \ref{HS_sol_flat}] Let $(\omega,J)$ be a balanced non-K\"ahler structure on $G$ and $\pi:E\to G$ a flat holomorphic bundle. Let also $\nabla^\tau$ be a Gauduchon connection on $(G,\omega,J)$. By means of Lemma \ref{ddbar}, if $\tau\neq\{0,1\}$ (that is, $\nabla^\tau$ is neither the Chern nor the Lichnerowicz connection), there always exists $\alpha'\in\R\backslash\{0\}$ such that $(\omega,J)$ solves the Hull-Strominger system \eqref{HS}. On the other hand, if $(\omega,J)$ solves the Hull-Strominger system \eqref{HS} for some $\alpha'\in\R\backslash\{0\}$, then $\tau\neq\{0,1\}$, and hence the claim follows.
\end{proof}

It is worth noting that, Theorem \ref{HS_sol_flat} completely characterizes the existence of solutions to the Hull-Strominger system \eqref{HS} in the non-K\"ahler setting. On the other hand, in the K\"ahler case we have the following proposition, whose proof directly follows by Corollary \ref{kahler}.

\begin{proposition}\label{HS-kah} Let $(\omega,J)$ be a K\"ahler structure on $G$ and $\pi:E\to G$ a flat holomorphic bundle. Then, $(\omega,J)$ solves the Hull-Strominger system \eqref{HS} for any Gauduchon connection $\nabla^\tau$ and $\alpha'\in\R$.
\end{proposition}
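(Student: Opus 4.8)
The plan is to verify the three groups of equations in \eqref{HS} one at a time and to observe that, in the K\"ahler case, each of them collapses to a trivial identity. The bundle side is immediate: since $\pi\colon E\to G$ is flat we have $F\equiv 0$, so the Hermitian--Yang--Mills equations $\omega^2\wedge F=0$ and $F^{2,0}=F^{0,2}=0$ hold automatically, and moreover $\tr(F\wedge F)=0$.

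For the anomaly cancellation equation I would use that the K\"ahler condition $d\omega=0$ forces $\partial\omega=\bar\partial\omega=0$ by type, hence $i\partial\bar\partial\omega=0$, so its left-hand side vanishes; alternatively this is $dd^c\omega=0$, consistently with Lemma \ref{ddbar}. On the right-hand side, $\tr(F\wedge F)=0$ by flatness, while $\tr(\Omega^\tau\!\wedge\Omega^\tau)=0$ by Corollary \ref{kahler}, which holds for every $\tau\in\R$ (it rests only on $A\in\mathfrak u(\n_1)$ and the structure equations \eqref{str-eq-red}). Thus the anomaly equation reduces to $0=\tfrac{\alpha'}{4}(0-0)$, valid for every Gauduchon connection $\nabla^\tau$ and every $\alpha'\in\R$, including $\alpha'=0$.

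It remains to treat the conformally balanced (dilatino) equation. A K\"ahler metric satisfies $d\omega=0$, hence is balanced; equivalently, by Proposition \ref{bal_str}, $\tr A=0$ and $v=0$. Since $\Psi$ and $\omega$ are left-invariant, the function $\Vert\Psi\Vert_\omega$ is a left-invariant function on the connected Lie group $G$, hence constant, and therefore $d(\Vert\Psi\Vert_\omega\,\omega^2)=\Vert\Psi\Vert_\omega\,d(\omega^2)=2\,\Vert\Psi\Vert_\omega\,\omega\wedge d\omega=0$. Assembling the three steps shows that $(\omega,J)$ solves \eqref{HS}.

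I do not anticipate any genuine obstacle here: the only point worth making explicit is that left-invariance of $\Psi$ and $\omega$ makes $\Vert\Psi\Vert_\omega$ constant, so that the conformally balanced condition reduces to the (automatic) balanced condition; everything else is an immediate consequence of $d\omega=0$, $F\equiv 0$, and Corollary \ref{kahler}.
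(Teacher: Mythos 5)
Your proof is correct and follows essentially the same route as the paper, which simply invokes Corollary \ref{kahler} to kill the term $\tr(\Omega^\tau\!\wedge\Omega^\tau)$ and leaves the (trivial) verification of the remaining equations implicit. Your write-up just makes explicit what the paper leaves to the reader: $F\equiv 0$ handles the Hermitian--Yang--Mills equations, $d\omega=0$ gives $i\partial\bar\partial\omega=0$, and left-invariance of $\Psi$ and $\omega$ makes $\Vert\Psi\Vert_\omega$ constant so that the dilatino equation reduces to $d(\omega^2)=0$.
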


We conclude the section by focusing on Gauduchon connections satisfying the instanton condition. Let us recall that, a connection $\nabla$ is said to be an {\em instanton} if its curvature $\Omega^\nabla$ satisfies the Hermitian-Yang-Mills equation
\begin{equation}\label{instanton_cond}
\omega^2\wedge \Omega^\nabla=0\,,\quad (\Omega^\nabla)^{2,0}=(\Omega^\nabla)^{0,2}=0\,.
\end{equation}

\begin{proposition}\label{Inst} Let $(\omega,J)$ be a balanced structure on $G$. If the Gauduchon connection $\nabla^\tau$ on $(G,\omega,J)$ is an instanton with respect to $\omega$, then it has to be a flat. Moreover, for any $\tau\neq1$, $\nabla^\tau$ is an instanton if and only if the metric $\omega$ is K\"ahler.
\end{proposition}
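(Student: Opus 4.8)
The plan is to read the curvature of $\nabla^\tau$ off the connection $1$-forms in the proof of Proposition \ref{tr-bas} and to impose the instanton condition \eqref{instanton_cond}. Two reductions come for free: a balanced structure with holomorphically trivial canonical bundle forces $a=0$ and $v=0$ by Propositions \ref{hol_bun} and \ref{bal_str}, so the structure equations are \eqref{str-eq-red}; and if $\omega$ is K\"ahler then $d^c\omega=0$, hence $\nabla^\tau=\nabla^{LC}$ for every $\tau$ by \eqref{family_G}. The ``if'' part of the second assertion is then immediate: when $S(A)=0$ (that is, $\omega$ K\"ahler) each connection $1$-form $(\sigma^\tau)^i_j$ in that list is a multiple of $e^6$, and since $de^6=0$ we get $d(\sigma^\tau)^i_j=0$ and $(\sigma^\tau)^i_k\wedge(\sigma^\tau)^k_j=0$, so $(\Omega^\tau)^i_j=0$; thus in the K\"ahler case $\nabla^\tau$ is flat, hence an instanton.

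For the converse and the first assertion I would dispose of the Chern case $\tau=1$ first, as it is clean. For $\tau=1$ the ``$(\tau-1)$-terms'' vanish, so every $(\sigma^1)^i_j$ lies in $\mathrm{span}_\R\{e^1,e^6\}$; since $de^1=de^6=0$ this gives $d(\sigma^1)^i_j=0$, whence $(\Omega^1)^i_j=\sum_k(\sigma^1)^i_k\wedge(\sigma^1)^k_j$ is a multiple of $e^{16}$. But $\omega^2=2(e^{1625}+e^{1634}+e^{2534})$, so $\omega^2\wedge e^{16}=2\,e^{2534}\wedge e^{16}\neq0$; hence $e^{16}$ is not $\omega$-primitive and the equation $\omega^2\wedge(\Omega^1)^i_j=0$ forces $(\Omega^1)^i_j=0$. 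So an instanton Chern connection is flat; it need not be K\"ahler, which is why $\tau=1$ is excluded from the second assertion.

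For $\tau\neq1$ I would compute $(\Omega^\tau)^i_j$ explicitly via \eqref{curvature}; writing $(\sigma^\tau)^i_j=(\sigma^1)^i_j+(\tau-1)\,\eta^i_j$ with $\eta$ independent of $\tau$ makes $\Omega^\tau=\Omega^1+(\tau-1)\mu+(\tau-1)^2\nu$ with $\mu$ and $\nu$ explicit and $\tau$-independent. Imposing the two halves of \eqref{instanton_cond}, namely the vanishing of $(\Omega^\tau)^{2,0}$ and the primitivity $\omega^2\wedge\Omega^\tau=0$, gives a quadratic system in the entries $A_i^j$, and the expectation is that it forces $4(A_2^2)^2+(A_2^3+A_3^2)^2+(A_2^4-A_3^5)^2=0$, i.e.\ $\tr_\omega(S(A)^2)=0$, i.e.\ $S(A)=0$, equivalently $A\in\mathfrak u(\n_1)$; then $\omega$ is K\"ahler and, by the first paragraph, $\nabla^\tau$ is flat, proving both assertions. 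A structural shortcut handles $\tau>0$: for an instanton a standard pointwise Chern-Weil identity gives $\tr(\Omega^\tau\wedge\Omega^\tau)\wedge\omega\le0$, with equality if and only if $\nabla^\tau$ is flat, while Lemma \ref{ddbar}, together with the explicit expression for $i\partial\bar\partial\omega$ in its proof, gives $\tr(\Omega^\tau\wedge\Omega^\tau)\wedge\omega=\tfrac{\tau(\tau-1)^2}{4}\big(\tr_\omega(S(A)^2)\big)^2\tfrac{\omega^3}{6}\ge0$; hence both sides vanish, yielding flatness and, for $\tau\neq0$, $S(A)=0$.

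The step I expect to be the real obstacle is the range $\tau\le0$, where the sign argument is inconclusive since both expressions for $\tr(\Omega^\tau\wedge\Omega^\tau)\wedge\omega$ are then non-positive; here one must analyse the full quadratic system and verify that the simultaneous vanishing of $(\Omega^\tau)^{2,0}$ and of $\omega^2\wedge\Omega^\tau$ leaves only the solutions with $S(A)=0$. This also supplies the K\"ahler conclusion for the Lichnerowicz connection $\tau=0$, for which the shortcut gives only flatness. The computation is finite and elementary, since by \eqref{str-eq-red} all connection $1$-forms lie in $\mathrm{span}_\R\{e^1,\dots,e^5\}$ and $de^a\in\mathrm{span}_\R\{e^{26},e^{36},e^{46},e^{56}\}$; the only real work is the bookkeeping.
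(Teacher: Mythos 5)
Your treatment of the cases you actually complete is correct and consistent with the paper. The ``if'' direction (K\"ahler $\Rightarrow$ $S(A)=0$ $\Rightarrow$ all $(\sigma^\tau)^i_j\in\mathrm{span}\{e^6\}$ $\Rightarrow$ flat $\Rightarrow$ instanton) and the Chern case ($\tau=1$: all $(\sigma^1)^i_j\in\mathrm{span}\{e^1,e^6\}$, so $(\Omega^1)^i_j$ is a multiple of the non-primitive form $e^{16}$ and the trace condition forces flatness) are exactly the paper's arguments. Your Chern--Weil shortcut for $\tau>0$, $\tau\neq1$ is a genuinely different route that the paper does not use: comparing the sign $\tr(\Omega^\tau\wedge\Omega^\tau)\wedge\omega\le0$, valid for any instanton, with the identity $\tr(\Omega^\tau\wedge\Omega^\tau)\wedge\omega=\tfrac{\tau(\tau-1)^2}{4}\big(\tr_\omega(S(A)^2)\big)^2\tfrac{\omega^3}{6}$ from Lemma \ref{ddbar} is correct and pleasantly conceptual, but it buys you only half of the parameter range.

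The genuine gap is precisely where you flag it: for $\tau<0$ (and, for the passage from flatness to $S(A)=0$, also $\tau=0$) you offer only the ``expectation'' that an unexecuted quadratic system forces $S(A)=0$. That is not a proof, and it is not a marginal case --- it is half of the Gauduchon line, including the Bismut connection $\tau=-1$. The paper closes all of $\tau\neq1$ at once, with no sign analysis and no ``full quadratic system'': it tests the first instanton equation on the single curvature component
$$(\Omega^\tau)^1_6=\tfrac{(\tau-1)^2}{8}\Big[4\big(A_2^2\big)^2+\big(A_2^3+A_3^2\big)^2+\big(A_2^4-A_3^5\big)^2\Big]\big(e^{25}+e^{34}\big)\,,$$
whose $\omega$-trace $(\Omega^\tau)^1_6(e_1,e_6)+(\Omega^\tau)^1_6(e_2,e_5)+(\Omega^\tau)^1_6(e_3,e_4)$ equals $\tfrac{(\tau-1)^2}{4}\big[4(A_2^2)^2+(A_2^3+A_3^2)^2+(A_2^4-A_3^5)^2\big]$. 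Its vanishing immediately gives $\tau=1$ or $S(A)=0$, i.e.\ $A\in\mathfrak u(\n_1)$, for every $\tau$. So your strategy would succeed, but the decisive computation --- identifying that one component suffices and evaluating it --- is exactly the step you left undone, and without it the statement is unproved for $\tau<0$.
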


\begin{proof} Let $\{e_1,\ldots,e_6\}$ be a left-invariant frame of $G$ satisfying \eqref{on-bas}. In terms of such a frame, the instanton condition \eqref{instanton_cond} reduces to
$$\begin{aligned}
&(\Omega^\tau)^i_j(e_1,e_6)+(\Omega^\tau)^i_j(e_2,e_5)+(\Omega^\tau)^i_j(e_3,e_4)=0\,,\\
&(\Omega^\tau)^i_j(e_k,e_l)-(\Omega^\tau)^i_j(Je_k,Je_l)=0\,,
\end{aligned}$$
for any $1\leq i,j,k,l\leq6$. 

Now, let $\nabla^\tau$ be an instanton with respect to $\omega$. By means of the curvature 2-forms $(\Omega^\tau)_i^j$ given in the Appendix B, it follows
$$
(\Omega^\tau)^1_6(e_1,e_6)+(\Omega^\tau)^1_6(e_2,e_5)+(\Omega^\tau)^1_6(e_3,e_4)=0
$$
if and only if
$$
\tau=1 \qquad \text{or}\qquad A_2^2=A_2^4-A_3^5=A_2^3+A_3^2=0\,.
$$
In the former case, i.e. $\tau=1$, one gets that $(\sigma^\tau)^i_j\in{\rm span}\{e^1,e^6\}$, for any $i,j\in\{1,\ldots,6\}$, which implies $(\Omega^\tau)^i_j\in {\rm span}\{e^1\wedge e^6\}$. Thus, the instanton condition reduces to
$$
(\Omega^\tau)^i_j(e_1,e_6)=0\,,
$$
and hence the connection has to be flat.

To conclude, let us assume $\tau\neq1$ and $A_2^2=A_2^4-A_3^5=A_2^3+A_3^2=0$ (that is, $A\in\mathfrak u(\n_1)$). In this setting, we have $(\sigma^\tau)^i_j\in{\rm span}\{e^6\}$, which yields $(\Omega^\tau)^i_j=0$, for any $i,j\in\{1,\ldots,6\}$. Therefore, since $A\in\mathfrak u(\n_1)$ is equivalent to require the metric $\omega$ to be K\"ahler, the claim follows.
\end{proof}

As a relevant application of the above proposition, we characterize almost-abelian Lie groups admitting solutions to the Hull-Strominger system with non-trivial instanton $\nabla^\tau$.

\begin{theorem}\label{HSI_sol} Let $(\omega,J)$ be a left-invariant Hermitian structure on $G$ solving to the Hull-Strominger system with flat bundle. If the Gauduchon connection $\nabla^\tau$ is an instanton with respect to $\omega$, then the Hermitian structure is K\"ahler.
\end{theorem}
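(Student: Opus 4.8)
The plan is to deduce Theorem \ref{HSI_sol} by combining Proposition \ref{Inst} with the earlier analysis of the Hull-Strominger system in the flat-bundle case. First I would observe that the hypothesis already forces the metric $(\omega,J)$ to be balanced: by the conformally balanced equation in \eqref{HS} together with $\Vert\Psi\Vert_\omega$ being constant (since $\Psi$ and $\omega$ are left-invariant on a Lie group, this norm is constant, so $d(\Vert\Psi\Vert_\omega\,\omega^2)=0$ reduces to $d\omega^2=0$, i.e. $\omega$ is balanced). Hence the structure $(\omega,J)$ on $G$ is a left-invariant balanced structure and, by Proposition \ref{hol_bun} and Proposition \ref{bal_str}, we are exactly in the setting where Proposition \ref{tr-bas}, Lemma \ref{ddbar}, and Proposition \ref{Inst} all apply.

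Next I would invoke Proposition \ref{Inst} directly. The statement assumes $\nabla^\tau$ is an instanton with respect to $\omega$; Proposition \ref{Inst} then tells us two things. If $\tau=1$ (the Chern connection), the conclusion of Proposition \ref{Inst} is that $\nabla^\tau$ is flat, so $\tr(\Omega^\tau\!\wedge\Omega^\tau)=0$, whence by Lemma \ref{ddbar} — or rather directly from the anomaly cancellation equation in \eqref{HS} with $F\equiv0$ — one gets $i\partial\bar\partial\omega=0$, i.e. $\omega$ is K\"ahler. If $\tau\neq1$, Proposition \ref{Inst} states that $\nabla^\tau$ being an instanton is equivalent to $\omega$ being K\"ahler, which is the desired conclusion. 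Either way the metric is K\"ahler.

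There is essentially only one subtlety to handle carefully, and it is the case $\tau=1$: here Proposition \ref{Inst} gives flatness of the connection rather than K\"ahlerness of the metric, so I must close the argument by feeding flatness back into the anomaly cancellation equation. With $F\equiv0$ and $\tr(\Omega^\tau\!\wedge\Omega^\tau)=0$, equation \eqref{HS} forces $i\partial\bar\partial\omega=0$; on an almost-abelian Lie group with balanced $(\omega,J)$ this means (via Lemma \ref{ddbar}, whose $i\partial\bar\partial\omega$ formula displays the relevant quadratic expressions in $A$, or equivalently via Corollary \ref{kahler} read backwards) that $A\in\mathfrak u(\n_1)$, i.e. $\omega$ is K\"ahler. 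The main obstacle — which is really minor — is just bookkeeping: making sure the two regimes $\tau=1$ and $\tau\neq1$ are both explicitly dispatched, and that the constancy of $\Vert\Psi\Vert_\omega$ in the invariant setting is noted so that the conformally balanced equation truly upgrades to the balanced condition needed to apply the earlier propositions. No hard estimate or new computation is required; the theorem is a corollary of Proposition \ref{Inst} plus the structure of \eqref{HS}.

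\begin{proof}[Proof of Theorem \ref{HSI_sol}]
Since $\omega$ and $\Psi$ are left-invariant, $\Vert\Psi\Vert_\omega$ is a constant, so the conformally balanced equation in \eqref{HS} reduces to $d\omega^2=0$; that is, $(\omega,J)$ is balanced. Thus Proposition \ref{Inst} applies. If $\tau=1$, then $\nabla^\tau$ is flat, so $\tr(\Omega^\tau\!\wedge\Omega^\tau)=0$; as the bundle is flat, the anomaly cancellation equation in \eqref{HS} gives $i\partial\bar\partial\omega=0$, and by Lemma \ref{ddbar} (equivalently, by the explicit form of $i\partial\bar\partial\omega$ computed there) this forces $A\in\mathfrak u(\n_1)$, i.e. $(\omega,J)$ is K\"ahler. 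If $\tau\neq1$, then by Proposition \ref{Inst} the instanton condition on $\nabla^\tau$ is equivalent to $(\omega,J)$ being K\"ahler. In either case $(\omega,J)$ is K\"ahler.
\end{proof}
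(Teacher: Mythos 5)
Your proposal is correct and follows essentially the same route as the paper: Proposition \ref{Inst} forces $\tr(\Omega^\tau\!\wedge\Omega^\tau)=0$, and then the anomaly cancellation equation with $F\equiv 0$ makes the (left-invariant, hence genuinely balanced) metric pluriclosed, which together with balancedness forces $A\in\mathfrak u(\n_1)$, i.e.\ K\"ahler. The only cosmetic difference is your case split on $\tau$: the paper treats all $\tau$ uniformly via the flatness conclusion of Proposition \ref{Inst}, and indeed your $\tau=1$ branch already works verbatim for every $\tau$, so the split is harmless but unnecessary.
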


\begin{proof} The claim directly follows by means of Proposition \ref{Inst}. Indeed, the instanton condition forces $\tr(\Omega^\tau\!\wedge\Omega^\tau)=0$, and hence the metric has to be K\"ahler, since it is both pluriclosed (that is, $\partial \bar \partial \omega=0$) and balanced.
\end{proof}

\section{The Anomaly flow on almost-abelian Lie groups}\label{flow_sec}

We now investigate the behaviour of the Anomaly flow on almost-abelian Lie groups. In particular, we show that the Anomaly flow always preserves the balanced condition and it reduces to a flow of a special form (Theorem \ref{AF-bal}). Finally, we use the bracket flow technique to prove the long-time existence of the flow (Theorem \ref{AF-long}).
\medskip

Let $(G,\omega_0,J)$ be a 6-dimensional almost-abelian Lie group equipped with a left-invariant balanced structure. Let also $\Psi$ be a left-invariant nowhere vanishing holomorphic $(3,0)$-form on $G$ (see \eqref{hol-form}). In the following, we will denote by $\{\omega_t\}$ the left-invariant solution to the Anomaly flow
\begin{equation}\label{AF-left}
\partial_t(\Vert\Psi\Vert_{\omega_t}\,\omega_t^2) =   i\partial\overline\partial\, \omega_t - \frac{\alpha'}{4} \tr(\Omega_t^\tau\!\wedge \Omega_t^\tau)\,,
\end{equation}
starting at $\omega_0$, and by $f(\omega_t)$ the real-valued function satisfying
\begin{equation}\label{f(omega)}
f(\omega_t)=1-\frac{\alpha'}{4}K(\omega_t,\tau)\,.
\end{equation}

We are now in a position to prove Theorem \ref{AF-bal}. 

\begin{proof}[Proof of Theorem \ref{AF-bal}] Let $\{\widetilde \omega_t\}$ be the family of left-invariant  Hermitian metrics solving
\begin{equation*}
\partial_t(\Vert\Psi\Vert_{\widetilde \omega_t}\,\widetilde\omega_t^2) = f(\widetilde\omega_t)\, i\partial\overline\partial\, \widetilde\omega_t\,,\qquad {\widetilde \omega_t}{|_0}=\omega_0\,.
\end{equation*} 
Then, it is not hard to prove that the solution $\{\widetilde \omega_t\}$ holds balanced along the flow. Indeed, since $f(\widetilde \omega)$ is left-invariant by construction, one gets 
$$
\partial_t(d(\Vert\Psi\Vert_{\widetilde \omega_t}\,\widetilde\omega_t^2))=d\,\partial_t(\Vert\Psi\Vert_{\widetilde \omega_t}\,\widetilde\omega_t^2) = d(f(\widetilde\omega_t)\, i\partial\overline\partial\, \widetilde\omega_t)=0\,,
$$
which implies $d(\Vert\Psi\Vert_{\widetilde \omega_t}\,\widetilde\omega_t^2)=0$ for every $t$, if $d(\Vert\Psi\Vert_{\widetilde \omega_0}\,\widetilde\omega_0^2)=0$, and hence the solution holds balanced.

On the other hand, by means of Lemma \ref{ddbar}, the family of left-invariant balanced metrics $\{\widetilde\omega_t\}$ also solves the Anomaly flow \eqref{AF-left}, and hence the claim follows by uniqueness of ODEs solutions.
\end{proof}

Let us now focus on the {\em reduced} Anomaly flow
\begin{equation}\label{AF-red}
\partial_t(\Vert\Psi\Vert_{\omega_t}\,\omega_t^2) = f(\omega_t)\, i\partial\overline\partial\, \omega_t\,, 
\end{equation}
starting at $\omega_0$. 
If we consider the rescaled metric $\nu_t=\Vert\Psi\Vert_{\omega_t}^{\frac 12}\,\omega_t$, then the reduced flow can be reformulated as follows
\begin{equation*}
\partial_t\,\nu_t^2 = \Vert\Psi\Vert_{\nu_t}^{-2}\,f(\Vert\Psi\Vert_{\nu_t}^{-2}\nu_t)\,  i\partial\overline\partial\, \nu_t\,,
\end{equation*}
or, equivalently, 
\begin{equation}\label{nu_t}
\partial_t\,\nu_t = \Vert\Psi\Vert_{\nu_t}^{-2}\,f(\Vert\Psi\Vert_{\nu_t}^{-2}\nu_t)\, \iota_{\nu_t}( i\partial\overline\partial\, \nu_t)\,,
\end{equation}
where $\iota_{\nu}$ denotes the contraction with respect to $\nu$. 

In the following, we will use the bracket flow techniques to prove the long-time existence of the reduced Anomaly flow.

\subsection*{\bf The bracket flow technique}\label{bf_sub}\hfill\medskip

The bracket flow technique provides a method to translate prescribed geometric flows into flows of Lie brackets. Introduced by Lauret to study the Ricci flow on nilmanifolds (\cite{LauMatAnn}), it has extensively been used to investigate different geometric flows in Hermitian geometry (see e.g. \cite{AL}, \cite{LPV}, \cite{LauHer}, \cite{PV}).\medskip

Let $(\g,\mu_0)$ be the Lie algebra of $G$. By definition, the Lie bracket $\mu_0$ belongs to the so-called {\em variety of Lie algebras}
\[
\mu_0\in \mathcal C:=\left\{\mu\in\Lambda^2\g^*\otimes \g :\text{$\mu$ satisfies the Jacobi identity and $J$ is integrable}\right\}\,,
\]
which admits the `natural' action of 
$$
{\rm Gl}(\g,J) = \left\lbrace \varphi\in{\rm Gl}(\g) : [\varphi, J]=0\right\rbrace\simeq {\rm Gl}_3(\C)
$$ 
defined by
\begin{equation}\label{act_alg}
\varphi\cdot\mu:=\varphi\,\mu(\varphi^{-1}\cdot,\varphi^{-1}\cdot)\,.
\end{equation}

Now, let us denote by $\{\nu_t\}$ the family of left-invariant balanced metrics satisfying \eqref{nu_t}. Then, by means of \cite[Thm. 1.1]{LauHer}, there always exists a smooth curve $\{\varphi_t\}_{t\in I}\in{\rm Gl}(\g,J)$ such that 
\begin{equation}\label{autom}
\nu_t(\cdot,\cdot)=\nu_0(\varphi_t\cdot,\varphi_t\cdot)\,,
\end{equation}
and, at the same time, the family of Lie brackets
$$
\mu_t:= \varphi_t\cdot\mu_0
$$
satisfies the {\em bracket flow equation}
\begin{equation}\label{br_flow}
\frac{d}{dt}\mu_t = \pi(\pp_{\mu_t})\mu_t\,,\qquad {\mu_t}_{\vert_0}=\mu_0\,.
\end{equation}
Here, $\pi: {\rm End}(\g)\to {\rm End}(\Lambda^2\g^*\otimes \g)$ denotes the representation of the action defined in \eqref{act_alg}, that is,
\begin{equation}\label{act_brac}
\pi(E)\mu (\cdot,\cdot)= E\mu(\cdot,\cdot)-\mu(E\cdot,\cdot)-\mu(\cdot,E\cdot)\,,
\end{equation}
while $\pp_{\mu_t}:\g\to\g$ is the endomorphism satisfying
\begin{equation}\label{end_tens}
\pp_{\mu_t} = \varphi_t \pp_{\nu_t} \varphi_t^{-1}\,,\qquad \nu_t(\pp_{\nu_t}\cdot,\cdot)=P(\nu_t)\,,
\end{equation}
with $P(\nu_t)$ right-hand side of \eqref{nu_t}, i.e. $$P(\nu_t)=\Vert\Psi\Vert_{\nu_t}^{-2}\,f(\Vert\Psi\Vert_{\nu_t}^{-2}\nu_t)\, \iota_{\nu_t}( i\partial\overline\partial\, \nu_t)\,.$$

\subsection*{\bf Long-time existence and convergence of the flow}\hfill\medskip

Let us denote by $\ip$ the scalar product induced by $\nu_0$ on $(\g,J)$, which naturally gives rise to scalar products on any tensor product of $\g$ and its duals. Let also $\{e_1,\ldots,e_6\}$ be a $\ip$-unitary basis of $\g$ and $\mu=\mu(a,v,A)$ an almost-abelian Lie bracket on $(\g,J)$ satisfying \eqref{ad_6}. Then, in view of Section \ref{prel}, the balanced condition reduces to an algebraic condition on the bracket, which suggests our next definition.

\begin{definition}\label{bal_brac}\rm An almost-abelian Lie bracket $\mu=\mu(a,v,A)$ on $(\g,\ip,J)$ is said to be {\em balanced} if it satisfies $\tr A=0$ and $v=0$. 
\end{definition}

\begin{lemma}\label{oper} Let $\mu=\mu(0,0,A)$ be an almost-abelian balanced bracket on $(\g,\ip,J)$. Then, the endomorphism $\pp_\mu$ is given by
$$
\pp_\mu= \Vert\Psi\Vert_{\ip}^{-2} \,f(A) \left(\begin{matrix}\lVert A^+\rVert^2&0&0\\0&2[A^+,A^-]&0\\0&0&\lVert A^+\rVert^2\end{matrix}\right)\,.
$$
Here, the blocks are in terms of $\R e_1\oplus \n_1 \oplus \R e_6$, $f:{\g\l}(\n_1)\to \R$ is the map satisfying
$$
f(A)=1-\frac{\alpha'}{4}\frac{\tau(\tau-1)^2}{8}\Vert\Psi\Vert_{\ip}^{-2}\lVert A^+\rVert^2\,,
$$
and $A^\pm= (A\pm A^*)/2$.
\end{lemma}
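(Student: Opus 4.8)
The plan is to compute $\pp_\mu$ directly from its defining relation $\nu_0(\pp_\mu\,\cdot\,,\cdot)=P(\nu_0)$, where $P(\nu_0)=\Vert\Psi\Vert_{\ip}^{-2}\,f(\Vert\Psi\Vert_{\ip}^{-2}\nu_0)\,\iota_{\nu_0}(i\partial\bar\partial\,\nu_0)$, specialized to the balanced bracket $\mu=\mu(0,0,A)$ with the background metric $\ip=\nu_0$ that makes $\{e_1,\dots,e_6\}$ unitary and satisfies \eqref{on-bas}. First I would record, using the structure equations \eqref{str-eq-red} and Lemma \ref{ddbar}, that for a balanced structure $i\partial\bar\partial\,\omega=K(\omega,\tau)^{-1}\,\tr(\Omega^\tau\wedge\Omega^\tau)$ is proportional to the explicit $(2,2)$-form displayed in the proof of Lemma \ref{ddbar}, all of whose components lie in $\mathrm{span}\{e^{1rs6}\}$ with $r,s\in\{2,3,4,5\}$. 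In particular $i\partial\bar\partial\,\nu_0$ involves only the index pair $\{1,6\}$ together with a pair from $\n_1$, which already forces $\pp_\mu$ to be block-diagonal with respect to $\R e_1\oplus\n_1\oplus\R e_6$: contracting such a $(2,2)$-form with $\nu_0=e^{16}+e^{25}+e^{34}$ and raising an index can only produce endomorphisms preserving each of the three blocks.

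Next I would identify the three diagonal blocks. The scalar factor $\Vert\Psi\Vert_{\ip}^{-2}\,f(\Vert\Psi\Vert_{\ip}^{-2}\nu_0)$ in $P$ is exactly $\Vert\Psi\Vert_{\ip}^{-2}f(A)$ once one observes (from \eqref{f(omega)} and Lemma \ref{ddbar}, with $A^+=S(A)$ the symmetric part and $\lVert A^+\rVert^2=\tr_\omega(S(A)^2)$ up to the normalization built into the display) that $f(\Vert\Psi\Vert_{\ip}^{-2}\nu_0)$ reduces to the stated $f(A)=1-\tfrac{\alpha'}{4}\tfrac{\tau(\tau-1)^2}{8}\Vert\Psi\Vert_{\ip}^{-2}\lVert A^+\rVert^2$; here one uses that $K$ is quadratic in $A$ and homogeneous of the right degree under the rescaling $\omega\mapsto\Vert\Psi\Vert^{-2}\omega$. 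For the $\R e_1$ and $\R e_6$ blocks, matching the $e^{1256}$ and $e^{1346}$ coefficients of $dd^c\nu_0$ against the expression $2[2(A_2^2)^2+\cdots]$ and re-expressing those symmetric polynomials in terms of $A^+$ shows both equal $\lVert A^+\rVert^2$ (this is the same computation that produced the coefficient $4(A_2^2)^2+(A_2^3+A_3^2)^2+(A_2^4-A_3^5)^2$ in $K$, which is precisely $\lVert A^+\rVert^2$ in the chosen basis since $A^-\in\mathfrak{u}(\n_1)$ contributes nothing). For the middle $\n_1$-block I would contract the remaining components — the $e^{1236}-e^{1456}$ and $e^{1246}+e^{1356}$ terms together with the $J$-invariance forced by \eqref{ad_6} — and recognize the resulting $4\times4$ skew-free matrix as $2[A^+,A^-]$; the commutator structure appears because the off-diagonal entries of $dd^c\nu_0$ are bilinear expressions like $A_2^2(A_2^4+A_3^5)-A_2^5(A_2^3+A_3^2)$, which are exactly the entries of the commutator of the symmetric and skew parts of $A$ written in the splitting $\n_1=\n_1'\oplus J\n_1'$.

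The main obstacle I anticipate is the bookkeeping in the $\n_1$-block: one must carefully track how the pair $(A^+,A^-)$ sits inside $\g\l(\n_1)$ relative to the $J$-adapted basis $e_2,e_3,e_4,e_5$ (with $Je_2=e_5$, $Je_3=e_4$), verify that $[A,J|_{\n_1}]=0$ makes $A^-$ commute with $J$ while $A^+$ anticommutes appropriately or not, and then check that contracting $i\partial\bar\partial\,\nu_0$ with $\nu_0$ genuinely reproduces $2[A^+,A^-]$ rather than some other quadratic expression — this requires comparing against the curvature computation of Proposition \ref{tr-bas} term by term. Everything else is a direct substitution: the proportionality $i\partial\bar\partial\,\nu_0\propto\tr(\Omega^\tau\wedge\Omega^\tau)$ from Lemma \ref{ddbar} reduces the problem to the already-computed right-hand side, the block-diagonal shape is forced by degree/index reasons, and the scalar prefactor is a one-line rescaling argument. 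Finally I would assemble the three blocks into the displayed matrix and conclude.
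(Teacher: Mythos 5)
The paper's own proof of this lemma is a single line (``the claim follows by a straightforward computation'', with a pointer to \cite{FP20}), and your proposal is exactly that computation laid out along the only available route: block-diagonality of $\pp_\mu$ forced by the fact that every term of $i\partial\bar\partial\nu$ lies in ${\rm span}\{e^{1rs6}\}$, identification of the $\R e_1$- and $\R e_6$-entries with $\lVert A^+\rVert^2$ via the coefficient $4(A_2^2)^2+(A_2^3+A_3^2)^2+(A_2^4-A_3^5)^2$ already computed in Lemma \ref{ddbar}, and recognition of the $\n_1$-block as $2[A^+,A^-]$. One bookkeeping caveat: $[A^+,A^-]$ is a symmetric, traceless, $J$-commuting matrix whose diagonal entries are generically nonzero, and these arise from contracting the $e^{1256}$ and $e^{1346}$ terms of $i\partial\bar\partial\nu$ against the $e^{16}$-component of $\nu$ — so the middle block is \emph{not} fed only by the cross terms $e^{1236}-e^{1456}$ and $e^{1246}+e^{1356}$ as your phrasing suggests; this is precisely the term-by-term verification you already flag as the main obstacle, and it goes through.
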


\begin{proof} The claim follows by a straightforward computation, see also \cite[Lemma 6.3]{FP20}.
\end{proof}

Our next result will be fundamental to prove the long-time existence of the flow.

\begin{proposition}\label{ev_A}
Let $\mu_0=\mu_0(0,0,A_0)$ be an almost-abelian balanced Lie bracket on $(\g,J)$. Then, the bracket flow \eqref{br_flow} starting at $\mu_0$ reduces to
\begin{equation}\label{ev_A_t}
\tfrac{d}{dt}A = \Vert\Psi\Vert_{\ip}^{-2} \,f(A)\left( 2[[A^+,A^-],A]- \lVert A^+\rVert^2A\right)\,.
\end{equation}
Moreover, the flow consists entirely of almost-abelian balanced brackets $\mu_t=\mu_t(0,0,A_t)$, and the (3,0)-form $\Psi$ holds closed.
\end{proposition}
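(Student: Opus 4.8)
The plan is to feed the explicit block form of $\pp_\mu$ supplied by Lemma \ref{oper} into the bracket flow equation \eqref{br_flow} and read off the resulting evolution component by component. Since Lemma \ref{oper} describes $\pp_\mu$ only for \emph{balanced} brackets, I would set up the argument as a bootstrap: first solve the finite-dimensional ODE \eqref{ev_A_t} for $A_t$ with the prescribed initial value $A_0$ (its right-hand side is a polynomial function of the entries of $A$, since $\lVert\Psi\rVert_{\ip}$ is a fixed constant, so the ODE is well posed on a short time interval), then verify that $\mu_t:=\mu_t(0,0,A_t)$ solves \eqref{br_flow}, and finally invoke uniqueness of ODE solutions to conclude that this curve is the bracket flow.

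For the computation, write $P:=\pp_\mu$ and $c:=\lVert\Psi\rVert_{\ip}^{-2}f(A)$. By Lemma \ref{oper}, $P$ is block-diagonal with respect to $\R e_1\oplus\n_1\oplus\R e_6$, acting by the scalar $c\lVert A^+\rVert^2$ on $\R e_1$ and on $\R e_6$, and by $2c[A^+,A^-]$ on $\n_1$; in particular $P$ preserves the abelian ideal $\n$ and rescales the line $\R e_6$. Using \eqref{act_brac} and the fact that $\mu=\mu(0,0,A)$ is abelian on $\n$, with $\mu(e_6,e_1)=0$ and $\mu(e_6,\cdot)|_{\n_1}=A$, one evaluates $\pi(P)\mu$ on the three relevant kinds of arguments. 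It vanishes on $\Lambda^2\n$ (both $\mu$ and $P$ preserve $\n$ and $\mu|_{\Lambda^2\n}=0$) and on the pair $(e_6,e_1)$ (since $\mu(e_6,e_1)=0$ while $P$ only rescales $e_6$ and $e_1$), whereas on $(e_6,Y)$ with $Y\in\n_1$ it gives
$$
\begin{aligned}
(\pi(P)\mu)(e_6,Y)&=P(AY)-c\lVert A^+\rVert^2 AY-A(PY)\\
&=c\bigl(2[A^+,A^-]A-\lVert A^+\rVert^2 A-2A[A^+,A^-]\bigr)Y=c\bigl(2[[A^+,A^-],A]-\lVert A^+\rVert^2 A\bigr)Y.
\end{aligned}
$$
Hence $\pi(P)\mu=\mu\bigl(0,0,\,c\,(2[[A^+,A^-],A]-\lVert A^+\rVert^2 A)\bigr)$, which is precisely \eqref{ev_A_t}, and at the same time shows that the flow remains almost-abelian with $a_t=v_t=0$.

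There remains the persistence of the balanced condition and of the closedness of $\Psi$. Since the trace annihilates commutators, taking the trace of \eqref{ev_A_t} gives $\tfrac{d}{dt}\tr A_t=-\lVert\Psi\rVert_{\ip}^{-2}f(A_t)\lVert A_t^+\rVert^2\,\tr A_t$, a linear homogeneous scalar ODE, so $\tr A_0=0$ forces $\tr A_t\equiv0$; thus $\mu_t$ is balanced in the sense of Definition \ref{bal_brac} for all $t$, which legitimizes the use of Lemma \ref{oper} along the whole curve and closes the bootstrap. As for $\Psi$: the bracket flow keeps $\mu_t\in\mathcal C$, so $J$ stays integrable, and with $a_t=v_t=0$ the integrability condition \eqref{ad_6} reads $[A_t,J|_{\n_1}]=0$; combining $[A_t,J]=0$ with cyclicity of the trace yields $\tr\bigl(J[[A_t^+,A_t^-],A_t]\bigr)=0$, hence applying $\tr(J\,\cdot\,)$ to \eqref{ev_A_t} gives $\tfrac{d}{dt}\tr(JA_t)=-\lVert\Psi\rVert_{\ip}^{-2}f(A_t)\lVert A_t^+\rVert^2\,\tr(JA_t)$, and $\tr(JA_0)=0$ forces $\tr(JA_t)\equiv0$. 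Since $a_t=0$, $\tr A_t=0$ and $\tr(JA_t)=0$, the computation in the proof of Proposition \ref{hol_bun} shows that $\Psi$ remains closed.

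The genuinely delicate point is this last step: keeping the fixed form $\Psi$ closed requires control not only of $\tr A_t$ but also of $\tr(JA_t)$, and it is the identity $\tr(J[[A_t^+,A_t^-],A_t])=0$ — a consequence of integrability, $[A_t,J]=0$ — that turns the evolution of $\tr(JA_t)$ into a linear homogeneous ODE with vanishing initial datum. Everything else is bookkeeping with the block-diagonal operator of Lemma \ref{oper}; the long-time existence of the flow is a separate issue, addressed in Theorem \ref{AF-long}.
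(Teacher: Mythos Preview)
Your proof is correct and follows essentially the same route as the paper: plug the block-diagonal operator of Lemma \ref{oper} into $\pi(\pp_\mu)\mu$, read off \eqref{ev_A_t}, and then check that $\tr A_t$ and $\tr(JA_t)$ satisfy linear homogeneous ODEs so that the balanced condition and $d\Psi=0$ persist. The only difference is packaging --- the paper parametrizes by $(a,v,A)$ (citing \cite{AL}) and argues $a_t=v_t=0$ by uniqueness, while you construct the candidate curve $\mu_t(0,0,A_t)$ directly and verify it solves \eqref{br_flow}; your bootstrap is arguably cleaner since it only invokes Lemma \ref{oper} on brackets where it is actually stated.
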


\begin{proof}
By means of Lemma \ref{oper}, the Anomaly flow preserves the splitting ${\R e_1\oplus \n_1 \oplus \R e_6}$, which in turn implies that the solution to the flow is of the form $\mu_t=\mu_t(a_t,v_t,A_t)$. Therefore, similarly to \cite[Prop. 4.11]{AL}, we have
$$
\frac{d}{dt}\left(\begin{matrix} a & 0\\ v & A
\end{matrix}\right) = \Vert\Psi\Vert_{\ip}^{-2} \,f(A)\left(\left[\left(\begin{matrix}\lVert A^+\rVert^2&0\\0&2[A^+,A^-]\end{matrix}\right),\left(\begin{matrix}a & 0 \\ v & A\end{matrix}\right)\right]- \lVert A^+\rVert^2\left(\begin{matrix}a & 0 \\ v & A\end{matrix}\right)\right)\,,
$$
or, equivalently, 
$$
\left\{
\begin{aligned}
\tfrac{d}{dt}{a} &= - \Vert\Psi\Vert_{\ip}^{-2} \,f(A)\,\lVert A^+\rVert^2\,a\,,\\
\tfrac{d}{dt}{v} &= \Vert\Psi\Vert_{\ip}^{-2} \,f(A) \left(2[A^+,A^-]-\lVert A^+\rVert^2\id\right)v\,,\\
\tfrac{d}{dt}{A} &= \Vert\Psi\Vert_{\ip}^{-2} \,f(A) \left(2[[A^+,A^-],A]- \lVert A^+\rVert^2A\right)\,.
\end{aligned}
\right.
$$
On the other hand, by uniqueness of ODE solutions we have $a_t=0$ and $v_t=0$, and hence the bracket flow reduces to the evolution equation
$$
\tfrac{d}{dt}A = \Vert\Psi\Vert_{\ip}^{-2} \,f(A)\left( 2[[A^+,A^-],A]- \lVert A^+\rVert^2A\right)\,.
$$

Finally, the second statement directly follows by \eqref{ev_A_t}. Indeed, since $J$ is fixed we have $[A_t,J|_{\n_1}]=0$, and a direct computation yields
$$
\tfrac{d}{dt} \tr (A_t) = -\Vert\Psi\Vert_{\ip}^{-2} \,f(A_t)\, \tr(A_t) \qquad \text{and}\qquad \tfrac{d}{dt} \tr (JA_t)=-\Vert\Psi\Vert_{\ip}^{-2} \,f(A_t)\, \tr(JA_t)\,.
$$
Thus, we have $\tr(A_t)=\tr(JA_t)=0$ and, in view of Definition \ref{bal_brac} and Proposition \ref{hol_bun}, the claim follows.
\end{proof}

We are now in a position to prove Theorem \ref{AF-long}.

\begin{proof}[Proof of Theorem \ref{AF-long}] 
By means of Proposition \ref{ev_A}, to prove the long-time existence of the Anomaly flow, it is enough to show that $A_t$ is defined for every $t\in[0,+\infty)$. In this setting, let us assume $\Vert\Psi\Vert_{\ip}^{-2}=1$. By looking at how the norm of $A_t$ evolves, one gets
\begin{equation}\label{ev_norm}
\begin{aligned}
\tfrac{d}{dt}\lVert A\rVert^2 &= 2\langle \tfrac{d}{dt}A,A \rangle \\
&= 2\,f(A)\left(2\langle [[A^+,A^-],A],A\rangle-\langle \lVert A^+\rVert^2A,A\rangle\right)\\
&=2 \,f(A)\left(2\langle [[A^+, A^-],A],A\rangle- \lVert A^+\rVert^2\lVert A\rVert^2\right)\\
&=2 \,f(A)\left(-4\lVert[A^+,A^-]\rVert^2- \lVert A^+\rVert^2\lVert A\rVert^2\right)\,,
\end{aligned}\end{equation}
where the last equality follows by:
$$
\langle [[A^+,A^-],A],A\rangle=-\frac 12\langle [[A,A^*],A],A\rangle=-\frac 12 \langle [A,A^*],[A,A^*]\rangle=-2 \lVert [A^+,A^-]\rVert^2\,.
$$
In particular, if $f(A)$ holds positive along the flow, then the claim follows by standard ODEs arguments. Therefore, let us focus on $f(A_t)$. In the same fashion as \eqref{ev_norm}, we have
$$
\tfrac {d}{dt}f(A)=c\,f(A)(4\lVert[A^+,A^-]\rVert^2+\tfrac 12\lVert A^+\rVert^4)\,, \qquad c=\tfrac{\alpha'}{4}\tfrac{\tau(\tau-1)^2}{8}\,,
$$
and, since $f(A_0)>0$ by hypothesis, we have $f(A_t)>0$ for every $t\in[0,+\infty)$. 

\smallskip
To conclude, we prove the convergence claim. Let us assume $\alpha'=0$. Then, by means of a direct computation, we have
$$
\tfrac {d}{dt}\lVert A^+\rVert^2=-4\lVert[A^+,A^-]\rVert^2-\tfrac 12\lVert A^+\rVert^4\,,
$$
and hence
$$
\tfrac {d}{dt}\lVert A^+\rVert^2\leq-\tfrac 12\lVert A^+\rVert^4\,,
$$
which yields the a priori estimate
$$
\lVert A_t^+\rVert^2 \leq \frac{\lVert A_0^+\rVert^2}{1+\tfrac12 \lVert A_0^+\rVert^2t}\,.
$$ 
Thus, we have $\lVert A_t\rVert^2\to \bar A\in\ \mathfrak u(\n_1)$, as $t\to \infty$, and this concludes the proof. Indeed, convergence in vector space topology yields subconvergence in Cheeger-Gromov topology \cite[Cor. 6.20]{LauLMS}, and $\bar A\in \mathfrak u(\n_1)$ is equivalent to require the metric to be K\"ahler.
\end{proof}

To conclude, we study the Anomaly flow on a concrete example.

\begin{ex}\rm Let $\g=(0,0,0,0,f^{12},f^{13})$ be the unique nilpotent almost-abelian Lie algebra (up to isomorphism) admitting a balanced structure (see \cite{U07}). Let also $J$ be the complex structure on $\g$ such that the $(1,0)$-basis $\{\zeta^1,\zeta^2,\zeta^3\}$ satisfies
$$
d\zeta^1=d\zeta^2=0\,,\qquad d\zeta^3=i\,\zeta^{12}+i\,\zeta^{2\bar 1}\,,
$$
and let $\Psi$ be the closed $(3,0)$-form given by $$\Psi=\zeta^1\wedge\, \zeta^2\wedge\, \zeta^3\,.$$

If we consider a diagonal Hermitian inner product $\nu$ on $(\g,J)$
$$
\nu=  i\, a\, \zeta^{1\bar1}+i\, b\, \zeta^{2\bar2}+i\, c\,\zeta^{3\bar3}\,, \qquad a,b,c>0\,,
$$
then a direct computation yields
$$
\iota_{\nu}( i\partial\overline\partial\, \nu)= i \left(\frac cb\, \zeta^{1\bar 1}+\frac ca\, \zeta^{2\bar 2}-\frac {c^2}{ab}\,\zeta^{3\bar 3}\right)\,,
$$
and $\lVert\Psi\rVert^2_{\nu}=(abc)^{-1}$. Thus, if we consider the initial metric $$\nu_0=  i\, \zeta^{1\bar1}+i\, \zeta^{2\bar2}+i\, \zeta^{3\bar3}\,,$$ the Anomaly flow $\partial_t\,\nu_t = \Vert\Psi\Vert_{\nu_t}^{-2}\, \iota_{\nu_t}( i\partial\overline\partial\, \nu_t)$ reduces to the ODEs system
$$
a'= ac^2\,,\quad b'=bc^2\,,\quad c'= -c^3\,,
$$
whose solution
$$
a_t=b_t=e^{\sqrt{2t+1}-1}\,,\quad c_t=\frac{1}{\sqrt{2t+1}}\,,
$$
is defined for every $t\in(-\tfrac12,+\infty)$.
\end{ex}

\section*{Appendix A}\label{AppA}

In the following we provide the explicit calculations used to compute $\tr(\Omega^\tau\!\wedge \Omega^\tau)$ in Proposition \ref{tr-bas}.

\medskip
Let $\{e^1,\ldots,e^6\}$ be a left-invariant coframe of $G$ satisfying \eqref{on-bas}. Then, by means of \eqref{curvature} and the connection 1-forms $(\sigma^{\tau})^i_j$ obtained in Proposition \ref{tr-bas}, one gets that $d((\sigma^\tau)_i^j)=-d((\sigma^\tau)_j^i)$ and
$$\begin{aligned}
d((\sigma^\tau)_2^1)= &+ \tfrac{(\tau-1)}{4}{\scriptstyle \big(2A_2^2A_2^5+ A_2^3A_3^5+A_2^4A_3^2 \big)}e^{26}+\tfrac{(\tau-1)}{4} {\scriptstyle\big(A_2^2\left(A_2^4+A_3^5\right)-A_2^5 \left(A_2^3+A_3^2\right) \big)}e^{36}\\
&-\tfrac{(\tau-1)}{4} {\scriptstyle\big(A_2^2\left(A_2^3-A_3^2\right)+A_2^5 \left(A_2^4-A_3^5\right) \big)}e^{46}-\tfrac{(\tau-1)}{4}{\scriptstyle \big(2\left(A_2^2\right)^2+\left(A_3^2\right)^2+\left(A_3^5\right)^2+A_2^3A_3^2-A_2^4A_3^5 \big)}e^{56}\,,\\
\\
d((\sigma^\tau)_3^1)= &-\tfrac{(\tau-1)}{4} {\scriptstyle\big(A_2^2\left(A_2^4+A_3^5\right)-A_2^5 \left(A_2^3+A_3^2\right) \big)}e^{26}+ \tfrac{(\tau-1)}{4}{\scriptstyle \big(2A_2^2A_2^5+ A_2^3A_3^5+A_2^4A_3^2 \big)}e^{36}\\
&-\tfrac{(\tau-1)}{4} {\scriptstyle\big(2\left(A_2^2\right)^2+\left(A_2^3\right)^2+\left(A_2^4\right)^2+A_2^3A_3^2-A_2^4A_3^5 \big)}e^{46}-\tfrac{(\tau-1)}{4} {\scriptstyle\big(A_2^2\left(A_2^3-A_3^2\right)+A_2^5 \left(A_2^4-A_3^5\right) \big)}e^{56}\,,\\
\\
d((\sigma^\tau)_4^1)= &+\tfrac{(\tau-1)}{4} {\scriptstyle\big(A_2^2\left(A_2^3-A_3^2\right)+A_2^5 \left(A_2^4-A_3^5\right) \big)}e^{26}+\tfrac{(\tau-1)}{4} {\scriptstyle\big(2\left(A_2^2\right)^2+\left(A_2^3\right)^2+\left(A_2^4\right)^2+A_2^3A_3^2-A_2^4A_3^5 \big)}e^{36}
\\&+ \tfrac{(\tau-1)}{4}{\scriptstyle \big(2A_2^2A_2^5+ A_2^3A_3^5+A_2^4A_3^2 \big)}e^{46}-\tfrac{(\tau-1)}{4} {\scriptstyle\big(A_2^2\left(A_2^4+A_3^5\right)-A_2^5 \left(A_2^3+A_3^2\right) \big)}e^{56}\,,\\
\\
d((\sigma^\tau)_5^1)= &+\tfrac{(\tau-1)}{4}{\scriptstyle \big(2\left(A_2^2\right)^2+\left(A_3^2\right)^2+\left(A_3^5\right)^2+A_2^3A_3^2-A_2^4A_3^5 \big)}e^{26}+\tfrac{(\tau-1)}{4} {\scriptstyle\big(A_2^2\left(A_2^3-A_3^2\right)+A_2^5 \left(A_2^4-A_3^5\right) \big)}e^{36}
\\&+\tfrac{(\tau-1)}{4} {\scriptstyle\big(A_2^2\left(A_2^4+A_3^5\right)-A_2^5 \left(A_2^3+A_3^2\right) \big)}e^{46}+ \tfrac{(\tau-1)}{4}{\scriptstyle \big(2A_2^2A_2^5+ A_2^3A_3^5+A_2^4A_3^2 \big)}e^{56}\,,
\end{aligned}$$
together with the following relations:
$$\begin{aligned}
d((\sigma^\tau)_2^1)=-d((\sigma^\tau)_6^5)\,,\quad d((\sigma^\tau)_3^1)=-d((\sigma^\tau)_6^4)\,,\quad d((\sigma^\tau)_4^1)=d((\sigma^\tau)_6^3)\,,\quad d((\sigma^\tau)_5^1)=d((\sigma^\tau)_6^2)\,.
\end{aligned}$$

On the other hand, if we denote by $(\Lambda^\tau)^i_j:=\sum(\sigma^\tau)^i_k\wedge (\sigma^\tau)^k_j$, it follows $(\Lambda^\tau)^i_j=-(\Lambda^\tau)^j_i$ and

$$\begin{aligned}
(\Lambda^\tau)_2^1= & -\tfrac{\tau(\tau-1)}{8}{\scriptstyle \left(4\left(A_2^2\right)^2+\left(A_2^3+A_3^2\right)^2+\left(A_2^4-A_3^5\right)^2 \right)}e^{12}- \tfrac{(\tau-1)}{4}{\scriptstyle \big(2A_2^2A_2^5+ A_2^3A_3^5+A_2^4A_3^2 \big)}e^{26}\\
&+\tfrac{(\tau-1)}{4} {\scriptstyle\big(A_2^2\left(A_2^4+A_3^5\right)-A_2^5 \left(A_2^3+A_3^2\right) \big)}e^{36}-\tfrac{(\tau-1)}{4} {\scriptstyle\big(A_2^2\left(A_2^3-A_3^2\right)+A_2^5 \left(A_2^4-A_3^5\right) \big)}e^{46}\\
&+\tfrac{(\tau-1)}{8} {\scriptstyle\big(\left(A_2^3\right)^2-\left(A_3^2\right)^2+\left(A_2^4\right)^2-\left(A_3^5\right)^2 \big)}e^{56}\,,\\
\\
(\Lambda^\tau)_3^1= &-\tfrac{\tau(\tau-1)}{8}{\scriptstyle \left(4\left(A_2^2\right)^2+\left(A_2^3+A_3^2\right)^2+\left(A_2^4-A_3^5\right)^2 \right)}e^{13}-\tfrac{(\tau-1)}{4} {\scriptstyle\big(A_2^2\left(A_2^4+A_3^5\right)-A_2^5 \left(A_2^3+A_3^2\right) \big)}e^{26}\\
&- \tfrac{(\tau-1)}{4}{\scriptstyle \big(2A_2^2A_2^5+ A_2^3A_3^5+A_2^4A_3^2 \big)}e^{36}-\tfrac{(\tau-1)}{8} {\scriptstyle\big(\left(A_2^3\right)^2-\left(A_3^2\right)^2+\left(A_2^4\right)^2-\left(A_3^5\right)^2 \big)}e^{46}\\
&+\tfrac{(\tau-1)}{4} {\scriptstyle\big(A_2^2\left(A_2^3-A_3^2\right)+A_2^5 \left(A_2^4-A_3^5\right) \big)}e^{56}\,,\\
\\
(\Lambda^\tau)_3^2= &+\tau{\scriptstyle\big(A_2^2\left(A_2^4+A_3^5\right)-A_2^5 \left(A_2^3+A_3^2\right) \big)}e^{16}+\tfrac{(\tau-1)^2}{16}{\scriptstyle \left(4\left(A_2^2\right)^2+\left(A_2^3+A_3^2\right)^2-\left(A_2^4+A_3^5\right)^2 \right)}\left(e^{23}-e^{45}\right)\\
&+\tfrac{(\tau-1)^2}{8}{\scriptstyle \left(\left(A_2^3+A_3^2\right)\left(A_2^4-A_3^5\right) \right)}\left(e^{24}+e^{35}\right)+\tfrac{(\tau-1)^2}{4}{\scriptstyle \left(A_2^2\left(A_2^4-A_3^5\right) \right)}\left(e^{25}-e^{34}\right)\,,\\
\\
(\Lambda^\tau)_4^1= &-\tfrac{\tau(\tau-1)}{8}{\scriptstyle \left(4\left(A_2^2\right)^2+\left(A_2^3+A_3^2\right)^2+\left(A_2^4-A_3^5\right)^2 \right)}e^{14}+\tfrac{(\tau-1)}{4} {\scriptstyle\big(A_2^2\left(A_2^3-A_3^2\right)+A_2^5 \left(A_2^4-A_3^5\right) \big)}e^{26}\\
&+\tfrac{(\tau-1)}{8} {\scriptstyle\big(\left(A_2^3\right)^2-\left(A_3^2\right)^2+\left(A_2^4\right)^2-\left(A_3^5\right)^2 \big)}e^{36}- \tfrac{(\tau-1)}{4}{\scriptstyle \big(2A_2^2A_2^5+ A_2^3A_3^5+A_2^4A_3^2 \big)}e^{46}\\
&-\tfrac{(\tau-1)}{4} {\scriptstyle\big(A_2^2\left(A_2^4+A_3^5\right)-A_2^5 \left(A_2^3+A_3^2\right) \big)}e^{56}\,,\\
\\
(\Lambda^\tau)_4^2= &-\tau{\scriptstyle\big(A_2^2\left(A_2^3-A_3^2\right)+A_2^5 \left(A_2^4-A_3^5\right) \big)}e^{16}+\tfrac{(\tau-1)^2}{8}{\scriptstyle \left(\left(A_2^3+A_3^2\right)\left(A_2^4-A_3^5\right) \right)}\left(e^{23}-e^{45}\right)\\
&+\tfrac{(\tau-1)^2}{16}{\scriptstyle \left(4\left(A_2^2\right)^2+\left(A_2^3+A_3^2\right)^2-\left(A_2^4+A_3^5\right)^2 \right)}\left(e^{24}+e^{35} \right)-\tfrac{(\tau-1)^2}{4}{\scriptstyle \left(A_2^2\left(A_2^3+A_3^2\right) \right)}\left(e^{25}-e^{34}\right)\,,
\end{aligned}$$
$$\begin{aligned}
(\Lambda^\tau)_4^3= &- \tfrac{\tau}{2} {\scriptstyle\big(\left(A_2^3\right)^2-\left(A_3^2\right)^2+\left(A_2^4\right)^2-\left(A_3^5\right)^2 \big)}e^{16}-\tfrac{(\tau-1)^2}{4}{\scriptstyle \left(A_2^2\left(A_2^4-A_3^5\right) \right)}\left(e^{23}-e^{45}\right)\\
&+\tfrac{(\tau-1)^2}{4}{\scriptstyle \left(A_2^2\left(A_2^3+A_3^2\right) \right)}\left(e^{24}+e^{35}\right)-\tfrac{(\tau-1)^2}{8}{\scriptstyle \left(\left(A_2^3+A_3^2\right)^2+\left(A_2^4-A_3^5\right)^2 \right)}e^{25}-\tfrac{(\tau-1)^2}{2}{\scriptstyle\left(A_2^2\right)^2}e^{34}\,,\\
\\
(\Lambda^\tau)_5^1= &-\tfrac{\tau(\tau-1)}{8}{\scriptstyle \left(4\left(A_2^2\right)^2+\left(A_2^3+A_3^2\right)^2+\left(A_2^4-A_3^5\right)^2 \right)}e^{15}-\tfrac{(\tau-1)}{8} {\scriptstyle\big(\left(A_2^3\right)^2-\left(A_3^2\right)^2+\left(A_2^4\right)^2-\left(A_3^5\right)^2 \big)}e^{26}\\
&+\tfrac{(\tau-1)}{4} {\scriptstyle\big(A_2^2\left(A_2^3-A_3^2\right)+A_2^5 \left(A_2^4-A_3^5\right) \big)}e^{36}+\tfrac{(\tau-1)}{4} {\scriptstyle\big(A_2^2\left(A_2^4+A_3^5\right)-A_2^5 \left(A_2^3+A_3^2\right) \big)}e^{46}\\
&- \tfrac{(\tau-1)}{4}{\scriptstyle \big(2A_2^2A_2^5+ A_2^3A_3^5+A_2^4A_3^2 \big)}e^{56}\,,\\
\\
(\Lambda^\tau)_5^2= &+ \tfrac{\tau}{2} {\scriptstyle\big(\left(A_2^3\right)^2-\left(A_3^2\right)^2+\left(A_2^4\right)^2-\left(A_3^5\right)^2 \big)}e^{16}+\tfrac{(\tau-1)^2}{4}{\scriptstyle \left(A_2^2\left(A_2^4-A_3^5\right) \right)}\left(e^{23}-e^{45}\right)\\
&-\tfrac{(\tau-1)^2}{4}{\scriptstyle \left(A_2^2\left(A_2^3+A_3^2\right) \right)}\left(e^{24}+e^{35}\right)-\tfrac{(\tau-1)^2}{2}{\scriptstyle\left(A_2^2\right)^2}e^{25}-\tfrac{(\tau-1)^2}{8}{\scriptstyle \left(\left(A_2^3+A_3^2\right)^2+\left(A_2^4-A_3^5\right)^2 \right)}e^{34}\,,\\
\\
(\Lambda^\tau)_6^1= &+\tfrac{(\tau-1)^2}{8}{\scriptstyle \left(4\left(A_2^2\right)^2+\left(A_2^3+A_3^2\right)^2+\left(A_2^4-A_3^5\right)^2 \right)}\left(e^{25}+e^{34}\right)\,,
\end{aligned}$$
together with the following relations:
$$\begin{aligned}
&(\Lambda^\tau)_2^1=-(\Lambda^\tau)_6^5\,,\quad (\Lambda^\tau)_3^1=-(\Lambda^\tau)_6^4\,,\quad (\Lambda^\tau)_4^1=(\Lambda^\tau)_6^3\,,\quad (\Lambda^\tau)_5^1=(\Lambda^\tau)_6^2\,,\\
&(\Lambda^\tau)_3^2=-(\Lambda^\tau)_5^4\,,\quad (\Lambda^\tau)_4^2=-(\Lambda^\tau)_5^3\,.
\end{aligned}$$

\section*{Appendix B}\label{AppB}

In this Appendix we provide the curvature 2-forms $(\Omega^\tau)_i^j$ of a Gauduchon connection $\nabla^\tau$ on an almost-abelian Lie group $(G,\omega,J)$.

\medskip
Let $\{e^1,\ldots,e^6\}$ be a left-invariant coframe of $G$ satisfying \eqref{on-bas}. Then, by means of \eqref{curvature} and the connection 1-forms $(\sigma^{\tau})^i_j$ obtained in Proposition \ref{tr-bas}, one gets that $(\Omega^\tau)_i^j=-(\Omega^\tau)_j^i$ and
$$\begin{aligned}
(\Omega^\tau)_2^1= & -\tfrac{\tau(\tau-1)}{8}{\scriptstyle \left(4\left(A_2^2\right)^2+\left(A_2^3+A_3^2\right)^2+\left(A_2^4-A_3^5\right)^2 \right)}e^{12}+\tfrac{(\tau-1)}{2} {\scriptstyle\big(A_2^2\left(A_2^4+A_3^5\right)-A_2^5 \left(A_2^3+A_3^2\right) \big)}e^{36}\\
&-\tfrac{(\tau-1)}{2} {\scriptstyle\big(A_2^2\left(A_2^3-A_3^2\right)+A_2^5 \left(A_2^4-A_3^5\right) \big)}e^{46}-\tfrac{(\tau-1)}{8}{\scriptstyle \big(4\left(\left(A_2^2\right)^2+\left(A_3^2\right)^2+\left(A_3^5\right)^2\right)-\left(A_2^3-A_3^2\right)^2- \left(A_2^4+A_3^5\right)^2 \big)}e^{56}\,,\\
\\
(\Omega^\tau)_3^1= &-\tfrac{\tau(\tau-1)}{8} {\scriptstyle\big(4\left(A_2^2\right)^2+\left(A_2^3+A_3^2\right)^2+\left(A_2^4-A_3^5\right)^2 \big)}e^{13}-\tfrac{(\tau-1)}{2} {\scriptstyle\big(A_2^2\left(A_2^4+A_3^5\right)-A_2^5 \left(A_2^3+A_3^2\right) \big)}e^{26}\\
&-\tfrac{(\tau-1)}{8} {\scriptstyle\big(4\left(\left(A_2^2\right)^2+\left(A_2^3\right)^2+\left(A_2^4\right)^2\right)-\left(A_2^3-A_3^2\right)^2- \left(A_2^4+A_3^5\right)^2 \big)}e^{46}-\tfrac{(\tau-1)}{2} {\scriptstyle\big(A_2^2\left(A_2^3-A_3^2\right)+A_2^5 \left(A_2^4-A_3^5\right) \big)}e^{56}\,,\\
\\
(\Omega^\tau)_3^2=&+{\scriptstyle\tau  \big(A_2^2\left(A_2^4+A_3^5\right)-A_2^5 \left(A_2^3+A_3^2\right) \big)}e^{16}+\tfrac{(\tau-1)^2}{16}{\scriptstyle \big(4\left(A_2^2\right)^2+\left(A_2^3+A_3^2\right)^2-\left(A_2^4-A_3^5\right)^2 \big)}\left(e^{23}-e^{45}\right)\\
&+\tfrac{(\tau-1)^2}{8} {\scriptstyle\big(\left(A_2^3+A_3^2\right)\left(A_2^4-A_3^5\right) \big)}\left(e^{24}+e^{35}\right)+\tfrac{(\tau-1)^2}{4} {\scriptstyle\big(A_2^2\left(A_2^4-A_3^5\right) \big)}\left(e^{25}-e^{34}\right)\,,\\
\\
(\Omega^\tau)_4^1= &-\tfrac{\tau(\tau-1)}{8}{\scriptstyle \big(4\left(A_2^2\right)^2+\left(A_2^3+A_3^2\right)^2+\left(A_2^4-A_3^5\right)^2 \big)}e^{14}-\tfrac{(\tau-1)}{2} {\scriptstyle\big(A_2^2\left(A_2^4+A_3^5\right)-A_2^5 \left(A_2^3+A_3^2\right) \big)}e^{56}\\
&+\tfrac{(\tau-1)}{2}{\scriptstyle \big(A_2^2\left(A_2^3-A_3^2\right)+A_2^5 \left(A_2^4-A_3^5\right) \big)}e^{26}+\tfrac{(\tau-1)}{8}{\scriptstyle \big(4\left(\left(A_2^2\right)^2+\left(A_2^3\right)^2+\left(A_2^4\right)^2\right)-\left(A_2^3-A_3^2\right)^2- \left(A_2^4+A_3^5\right)^2 \big)}e^{36}\,,\\
\\
(\Omega^\tau)_4^2=&-{\scriptstyle\tau  \big(A_2^2\left(A_2^3-A_3^2\right)+A_2^5 \left(A_2^4-A_3^5\right) \big)}e^{16}+\tfrac{(\tau-1)^2}{16} {\scriptstyle\big(4\left(A_2^2\right)^2-\left(A_2^3+A_3^2\right)^2+\left(A_2^4-A_3^5\right)^2 \big)}\left(e^{24}+e^{35}\right)\\
&+\tfrac{(\tau-1)^2}{8} {\scriptstyle\big(\left(A_2^3+A_3^2\right)\left(A_2^4-A_3^5\right) \big)\left(e^{23}-e^{45}\right)}-\tfrac{(\tau-1)^2}{4} {\scriptstyle\big(A_2^2\left(A_2^3+A_3^2\right) \big)}\left(e^{25}-e^{34}\right)\,,\\
\\
(\Omega^\tau)_4^3=&-\tfrac{\tau}{2} {\scriptstyle\big(\left(A_2^3\right)^2-\left(A_3^2\right)^2+\left(A_2^4\right)^2-\left(A_3^5\right)^2 \big)}e^{16}-\tfrac{(\tau-1)^2}{4} {\scriptstyle\big(A_2^2\left(A_2^4-A_3^5\right) \big)}\left(e^{23}-e^{45}\right)\\
&+\tfrac{(\tau-1)^2}{4} {\scriptstyle\big(A_2^2\left(A_2^3+A_3^2\right) \big)}\left(e^{24}+e^{35}\right)-\tfrac{(\tau-1)^2}{8} {\scriptstyle\big(\left(A_2^3+A_3^2\right)^2+\left(A_2^4-A_3^5\right)^2 \big)}e^{25}-\tfrac{(\tau-1)^2}{2}{\scriptstyle\left(A_2^2\right)^2}e^{34}\,,
\end{aligned}$$
$$\begin{aligned}
(\Omega^\tau)_5^1= &-\tfrac{\tau(\tau-1)}{8}{\scriptstyle \big(4\left(A_2^2\right)^2+\left(A_2^3+A_3^2\right)^2+\left(A_2^4-A_3^5\right)^2 \big)}e^{15}+\tfrac{(\tau-1)}{2} {\scriptstyle\big(A_2^2\left(A_2^4+A_3^5\right)-A_2^5 \left(A_2^3+A_3^2\right) \big)}e^{46}\\
&+\tfrac{(\tau-1)}{2} {\scriptstyle\big(A_2^2\left(A_2^3-A_3^2\right)+A_2^5 \left(A_2^4-A_3^5\right) \big)}e^{36}+\tfrac{(\tau-1)}{8} {\scriptstyle\big(4\left(\left(A_2^2\right)^2+\left(A_3^2\right)^2+\left(A_3^5\right)^2\right)-\left(A_2^3-A_3^2\right)^2- \left(A_2^4+A_3^5\right)^2 \big)}e^{26}\,,\\
\\
(\Omega^\tau)_5^2=&+\tfrac{\tau}{2} {\scriptstyle\big(\left(A_2^3\right)^2-\left(A_3^2\right)^2+\left(A_2^4\right)^2-\left(A_3^5\right)^2 \big)}e^{16}+\tfrac{(\tau-1)^2}{4} {\scriptstyle\big(A_2^2\left(A_2^4-A_3^5\right) \big)}\left(e^{23}-e^{45}\right)\\
&-\tfrac{(\tau-1)^2}{4} {\scriptstyle\big(A_2^2\left(A_2^3+A_3^2\right) \big)}\left(e^{24}+e^{35}\right)-\tfrac{(\tau-1)^2}{8} {\scriptstyle\big(\left(A_2^3+A_3^2\right)^2+\left(A_2^4-A_3^5\right)^2 \big)}e^{34}-\tfrac{(\tau-1)^2}{2}{\scriptstyle\left(A_2^2\right)^2}e^{25}\,,\\
\\
(\Omega^\tau)_6^1=&+\tfrac{(\tau-1)^2}{8} {\scriptstyle\big(4\left(A_2^2\right)^2+\left(A_2^3+A_3^2\right)^2+\left(A_2^4-A_3^5\right)^2 \big)}\left(e^{25}+e^{34}\right)\,,
\end{aligned}$$
together with the following relations:
$$\begin{aligned}
&(\Omega^\tau)_6^5=-(\Omega^\tau)_2^1\,,\quad (\Omega^\tau)_6^4=-(\Omega^\tau)_3^1\,,\quad (\Omega^\tau)_6^3=(\Omega^\tau)_4^1\,,\quad (\Omega^\tau)_6^2=(\Omega^\tau)_5^1\,,\\
&(\Omega^\tau)_5^4=-(\Omega^\tau)_3^2\,,\quad (\Omega^\tau)_5^3=(\Omega^\tau)_4^2\,.
\end{aligned}$$

\bibliographystyle{abbrv}

\end{document}